\begin{document}

\newcommand{\ci}[1]{_{ {}_{\scriptstyle #1}}}

\newcommand{\norm}[1]{\ensuremath{\left\|#1\right\|}}
\newcommand{\abs}[1]{\ensuremath{\left\vert#1\right\vert}}
\newcommand{\ip}[2]{\ensuremath{\left\langle#1,#2\right\rangle}}
\newcommand{\p}{\ensuremath{\partial}}
\newcommand{\pr}{\mathcal{P}}

\newcommand{\pbar}{\ensuremath{\bar{\partial}}}
\newcommand{\db}{\overline\partial}
\newcommand{\D}{\mathbb{D}}
\newcommand{\B}{\mathbb{B}}
\newcommand{\Sp}{\mathbb{S}}
\newcommand{\T}{\mathbb{T}}
\newcommand{\R}{\mathbb{R}}
\newcommand{\Z}{\mathbb{Z}}
\newcommand{\C}{\mathbb{C}}
\newcommand{\N}{\mathbb{N}}
\newcommand{\mQ}{\mathcal{Q}}
\newcommand{\mS}{\mathcal{S}}
\newcommand{\scrH}{\mathcal{H}}
\newcommand{\scrL}{\mathcal{L}}
\newcommand{\td}{\widetilde\Delta}
\newcommand{\pw}{\text{PW}}
\newcommand{\esup}{\text{ess.sup}}
\newcommand{\Tn}{\mathcal{T}_n}
\newcommand{\Bn}{\mathbb{B}_n}
\newcommand{\rt}{\mathcal{O}}
\newcommand{\avg}[1]{\langle #1 \rangle}
\newcommand{\one}{\mathbbm{1}}
\newcommand{\eps}{\varepsilon}

\newcommand{\La}{\langle }
\newcommand{\Ra}{\rangle }
\newcommand{\rk}{\operatorname{rk}}
\newcommand{\card}{\operatorname{card}}
\newcommand{\ran}{\operatorname{Ran}}
\newcommand{\osc}{\operatorname{OSC}}
\newcommand{\im}{\operatorname{Im}}
\newcommand{\re}{\operatorname{Re}}
\newcommand{\tr}{\operatorname{tr}}
\newcommand{\vf}{\varphi}
\newcommand{\f}[2]{\ensuremath{\frac{#1}{#2}}}

\newcommand{\kzp}{k_z^{(p,\alpha)}}
\newcommand{\klp}{k_{\lambda_i}^{(p,\alpha)}}
\newcommand{\TTp}{\mathcal{T}_p}
\newcommand{\m}[1]{\mathcal{#1}}
\newcommand{\md}{\mathcal{D}}
\newcommand{\qan}{\abs{Q}^{\alpha/n}}
\newcommand{\sbump}[2]{[[ #1,#2 ]]}
\newcommand{\mbump}[2]{\lceil #1,#2 \rceil}
\newcommand{\cbump}[2]{\lfloor #1,#2 \rfloor}


\newcommand{\entrylabel}[1]{\mbox{#1}\hfill}

\newenvironment{entry}
{\begin{list}{X}%
  {\renewcommand{\makelabel}{\entrylabel}%
      \setlength{\labelwidth}{55pt}%
      \setlength{\leftmargin}{\labelwidth}
      \addtolength{\leftmargin}{\labelsep}%
   }%
}%
{\end{list}}


\numberwithin{equation}{section}

\newtheorem{thm}{Theorem}[section]
\newtheorem{lm}[thm]{Lemma}
\newtheorem{cor}[thm]{Corollary}
\newtheorem{conj}[thm]{Conjecture}
\newtheorem{prob}[thm]{Problem}
\newtheorem{prop}[thm]{Proposition}
\newtheorem*{prop*}{Proposition}
\newtheorem{claim}[thm]{Claim}

\theoremstyle{remark}
\newtheorem{rem}[thm]{Remark}
\newtheorem*{rem*}{Remark}
\newtheorem{defn}[thm]{Definition}

\hyphenation{geo-me-tric}

\title[Entropy Bounds for Fractional Operators]
{Some Entropy Bump Conditions for Fractional Maximal and Integral Operators}

\author[R. Rahm]{Robert Rahm}
\address{Robert Rahm, School of Mathematics\\ Georgia Institute of Technology\\ 686 Cherry 
    Street\\ Atlanta, GA USA 30332-0160}
\email{rrahm3@math.gatech.edu}
\author[S. Spencer]{Scott Spencer}
\address{Scott Spencer, School of Mathematics\\ Georgia Institute of Technology\\ 686 Cherry 
    Street\\ Atlanta, GA USA 30332-0160}
\email{spencer@math.gatech.edu}

\subjclass[2010]{42B20, 42B25}
\keywords{Fractional Integral Operator, Fractional Maximal Operator, Weighted Inequalities,
    Entropy Bounds, Sparse Operator}

\begin{abstract}
We investigate weighted inequalities for fractional maximal operators and 
fractional integral operators. We work within the innovative framework of 
``entropy bounds'' introduced by Treil--Volberg. Using techniques developed
by Lacey and the second author, we are able to efficiently prove 
the weighted inequalities. 
\end{abstract}

\maketitle
\section{Introduction}
We are concerned with two-weight inequalities for the fractional maximal and fractional
integral operators. The goal is to find simple, $A_p-$like conditions for a pair of weights 
(non--negative, locally integrable functions) $\sigma,w$ to ensure 
\begin{align} 
\|T^\sigma:L^p(\sigma)\to L^q(w)\|<\infty, \label{goal} 
\end{align} 
where $T$ denotes a fractional maximal or fractional integral operator, 
and $T^\sigma(f):=T(\sigma f)$.

One popular approach, initiated by Neugebauer in \cite{Neu} and developed by P\'erez in \cites{Per, Per1}, has been to 
slightly strengthen the $A_p$ characteristic by introducing new factors.
These new factors, known as bumps, have come in different forms. For
example, Neugebauer requires that the weights 
$\sigma^{1+\epsilon}$ and $w^{1+\epsilon}$ belong to $A_p$, while P\'{e}rez 
requires that the two weights have finite Orlicz norm.  
The Orlicz approach is also taken by Cruz-Uribe and Moen in \cite{CruMoe1}.  See the recent paper of Cruz--Uribe 
\cite{Cru} and the references therein for more information.

In the context of Calder\'{o}n--Zygmund operators, 
Treil--Volberg have recently introduced the 
notion of \textit{entropy bounds} and are able to deduce 
stronger results than have been obtained using the 
Orlicz approach \cite{TreVol}. In \cite{LacSpe}, Lacey and the second
author combine the entropy bound approach with the
theory of sparse operators, introduced by Lerner \cite{Ler},
to efficiently deduce stronger results
than in \cite{TreVol}. We use these techniques to prove similar 
results for fractional integral and fractional maximal operators. 
In particular, we require that our weights satisfy certain
``bump" or ``separated bump'' conditions (to be defined 
below.) 

Before stating the main theorems, we give some definitions. 
For $0<\alpha<n$, the fractional maximal operator for
functions defined on $\mathbb{R}^n$ is
\begin{align*}
M_{\alpha}f(x) 
:= \sup_{Q\text{ a cube}}\frac{\one_{Q}(x)}{\abs{Q}^{1-\frac{\alpha}{n}}}
\int_{Q}\abs{f(y)}dy,  
\end{align*}
and the fractional integral operator is
\begin{align*}
I_{\alpha}f(x) :=
\int_{\mathbb{R}^n}\frac{f(y)}{\abs{y-x}^{n-\alpha}}dy.
\end{align*}

One reasonable generalisation of the Muckenhoupt $A_p$ condition
to the present setting is to set $[\sigma,w]:=\sup_{Q\text{ a cube}}\sigma(Q)^{1/p'}w(Q)^{1/q}\abs{Q}^{\alpha/n-1}$. 
Ideally, we would like for \eqref{goal} to hold when 
$[\sigma,w]$ is finite. This condition is insufficient (see 
\cite{CruMoe} for a counter example in the case
of the fractional maximal operator). This condition \textit{is} enough, however, to deduce \textit{weak-type} bounds for the maximal operator
(we present an alternate proof of this well--known result in Section \ref{weakmax}); in particular, there holds:
\begin{thm}\label{weakmax}
With $[\sigma,w]$ defined as above, $M_\alpha$ the fractional 
maximal operator, and $1\leq p \leq q \leq \infty$, there holds: 
\begin{align*}
\norm{M_\alpha(\sigma\cdot):L^p(\sigma)\to L^{q,\infty}(w)}\lesssim [\sigma,w].
\end{align*}
\end{thm}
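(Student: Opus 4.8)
The plan is to pass to a dyadic model operator, decompose the level set of the dyadic fractional maximal operator into a disjoint family of cubes, apply Hölder's inequality on each cube, and sum using $p\le q$. We may assume $[\sigma,w]<\infty$ and, replacing $f$ by $\abs{f}$, that $f\ge 0$. First I would reduce to a dyadic operator: by the standard adjacent dyadic grids lemma there are $3^n$ grids $\mathcal D_1,\dots,\mathcal D_{3^n}$ such that every cube $Q$ is contained in some $Q'\in\mathcal D_i$ with $\abs{Q'}\le 3^n\abs{Q}$, and since $\one_Q\le\one_{Q'}$, $\int_Q\abs{g}\le\int_{Q'}\abs{g}$, and $\abs{Q}^{\alpha/n-1}\le 3^{n-\alpha}\abs{Q'}^{\alpha/n-1}$, this yields $M_\alpha g\lesssim_{n,\alpha}\max_i M_\alpha^{\mathcal D_i}g$, where $M_\alpha^{\mathcal D}$ is the supremum restricted to cubes of the grid $\mathcal D$. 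Thus it suffices to bound a single dyadic operator, one dimensional constant absorbing the $3^n$ grids. The case $q=\infty$ needs no reduction: Hölder with respect to $d\sigma$ gives $\abs{Q}^{\alpha/n-1}\int_Q\sigma\abs{f}\le[\sigma,w]\norm{f}_{L^p(\sigma)}$ for every $Q$, hence the pointwise bound $M_\alpha(\sigma f)\le[\sigma,w]\norm{f}_{L^p(\sigma)}$ (and if also $p=\infty$ then $[\sigma,w]=\sup_Q\abs{Q}^{\alpha/n-1}=\infty$, so the claim is vacuous). So assume henceforth $1\le p\le q<\infty$.

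Fix $\lambda>0$. Since $\sigma\in L^1_{\mathrm{loc}}$ and $\norm{f}_{L^p(\sigma)}<\infty$, Hölder shows $\sigma f\in L^1_{\mathrm{loc}}$, so the set $E_\lambda:=\{M_\alpha^{\mathcal D}(\sigma f)>\lambda\}$ is, after the routine truncation restricting to dyadic cubes of side length at most $2^N$ (removed at the end by monotone convergence as $N\to\infty$), the disjoint union of the maximal dyadic cubes $\{Q_j\}$ with $\lambda\abs{Q_j}^{1-\alpha/n}<\int_{Q_j}\sigma f\,dy$. On each $Q_j$, Hölder with respect to $d\sigma$ yields $\int_{Q_j}\sigma f\,dy\le\bigl(\int_{Q_j}f^p\,d\sigma\bigr)^{1/p}\sigma(Q_j)^{1/p'}$; combining with $\lambda\abs{Q_j}^{1-\alpha/n}<\int_{Q_j}\sigma f\,dy$, raising to the power $q$, multiplying by $w(Q_j)$, and invoking the definition of $[\sigma,w]$ gives
\begin{equation*}
w(Q_j)\le\frac{1}{\lambda^q}\Bigl(w(Q_j)^{1/q}\sigma(Q_j)^{1/p'}\abs{Q_j}^{\alpha/n-1}\Bigr)^{q}\Bigl(\int_{Q_j}f^p\,d\sigma\Bigr)^{q/p}\le\frac{[\sigma,w]^q}{\lambda^q}\Bigl(\int_{Q_j}f^p\,d\sigma\Bigr)^{q/p}.
\end{equation*}
Finally, since $q/p\ge 1$ we have $\sum_j a_j^{q/p}\le\bigl(\sum_j a_j\bigr)^{q/p}$ for $a_j\ge 0$; applying this with $a_j=\int_{Q_j}f^p\,d\sigma$ and using the disjointness of the $Q_j$,
\begin{equation*}
w(E_\lambda)=\sum_j w(Q_j)\le\frac{[\sigma,w]^q}{\lambda^q}\Bigl(\sum_j\int_{Q_j}f^p\,d\sigma\Bigr)^{q/p}\le\frac{[\sigma,w]^q}{\lambda^q}\norm{f}_{L^p(\sigma)}^q,
\end{equation*}
so that $\lambda\,w(E_\lambda)^{1/q}\le[\sigma,w]\norm{f}_{L^p(\sigma)}$; taking the supremum over $\lambda$ and summing over the $3^n$ grids completes the argument.

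I expect the only genuinely nonroutine point to be the first step. Because $w$ is an arbitrary locally integrable weight rather than a doubling one, a Vitali covering argument with dilated cubes is unavailable --- one cannot pass from $w(CQ_j)$ back to $w(Q_j)$ --- so everything hinges on routing the level-set decomposition through the adjacent dyadic grids, where the selected cubes are honestly disjoint. The remainder (two applications of Hölder, and the embedding $\ell^1\hookrightarrow\ell^{q/p}$, which is precisely where the hypothesis $p\le q$ enters) is mechanical.
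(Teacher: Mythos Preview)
Your proof is correct and follows essentially the same route as the paper: reduce to a dyadic operator via the $3^n$ adjacent grids, select the maximal dyadic cubes in the level set, insert the definition of $[\sigma,w]$, and sum. The only cosmetic difference is that for $p>1$ the paper phrases the last step as observing that $\{\sigma(Q)^{q/p}\}_{Q\in\mathcal D_\lambda}$ is a $p,q$--Carleson sequence (trivially, since the cubes are disjoint) and then appeals to the Carleson embedding lemma, whereas you unpack this directly as H\"older on each cube plus $\ell^1\hookrightarrow\ell^{q/p}$; you also dispose of $q=\infty$ explicitly, which the paper's lemma omits.
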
  

\begin{rem}
In an earlier draft of this paper, we claimed that the above inequality holds
for the fractional integral operator as well. This is incorrect and we would
like to thank Kabe Moen for pointing out this error. 
\end{rem}

Since the finiteness of $[\sigma,w]$ is not enough to deduce
strong bounds, we use two types of bumped conditions to 
deduce the strong estimates. 
The first set of conditions on the weights that we consider 
require a single bump (compare with the 
separated bumps to be discussed later). Set $ \rho_\sigma(Q) := \frac{1}{\sigma(Q)}\int_{Q}M(\sigma \one_Q)$,
and define $\rho_w$ similarly, where $M$ is the 
Hardy--Littlewood maximal operator. We deal first with the fractional maximal
operator.

\begin{thm}\label{maxbump}
Let $\sigma$ and $w$ be two weights, $1<p\leq q < \infty$, and $M_\alpha$ be the
fractional maximal operator. 
Let $\epsilon_q$ be a monotonic increasing function on $(1,\infty)$ that satisfies 
$\int_{1}^{\infty}\frac{dt}{t\epsilon_q^q(t)}=1$. 
Define
\begin{align*}
\beta(Q)
:=\frac{\sigma(Q)^{1/p'}w(Q)^{1/q}}{\abs{Q}^{1-\alpha/n}}\rho_{\sigma}^{1/p}(Q)\epsilon_q(\rho_{\sigma}(Q)),
\end{align*}
and set $\mbump{\sigma}{w} := \sup_{Q\in\mathcal{D}}\beta(Q)$.  Then
\begin{align*}
\norm{M_{\alpha}(f\sigma)}_{L^q(w)}
    \lesssim \mbump{\sigma}{w} \norm{f}_{L^p(\sigma)}.
\end{align*}
\end{thm}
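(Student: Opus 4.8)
The plan is to pass to a dyadic model, linearize $M_\alpha$, reduce to a sparse sum, and then absorb all of the $w$--data into $\mbump{\sigma}{w}$; what is left is a purely $\sigma$--sided ``entropy Carleson'' estimate, and that is where the normalization $\int_1^\infty t^{-1}\epsilon_q^{-q}(t)\,dt=1$ does its work. A routine translation--dilation argument gives $M_\alpha f\lesssim\sum_i M_\alpha^{\mathcal D_i}f$ over finitely many dyadic grids, where $M_\alpha^{\mathcal D}$ is the dyadic fractional maximal operator and $\beta$ on the (slightly enlarged) dyadic cubes is still controlled by $\mbump{\sigma}{w}$; so it suffices to bound $\|M_\alpha^{\mathcal D}(f\sigma)\|_{L^q(w)}$ for one fixed grid $\mathcal D$. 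Throughout write $\langle f\rangle_Q^\sigma:=\sigma(Q)^{-1}\int_Q f\,\sigma$ and $\langle f\sigma\rangle_{Q,\alpha}:=|Q|^{\alpha/n-1}\int_Q f\sigma=\sigma(Q)|Q|^{\alpha/n-1}\langle f\rangle_Q^\sigma$.

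For $f\geq0$, linearize: pick for each $x$ a cube $Q(x)\in\mathcal D$ with $x\in Q(x)$ and $M_\alpha^{\mathcal D}(f\sigma)(x)\le 2\langle f\sigma\rangle_{Q(x),\alpha}$. A Calder\'on--Zygmund stopping time on the super-level sets $\{M_\alpha^{\mathcal D}(f\sigma)>2^m\}$ (or Lerner's sparse domination) then yields a \emph{sparse} family $\mathcal S\subseteq\mathcal D$ together with pairwise disjoint sets $E_Q\subseteq Q$ ($Q\in\mathcal S$), on each of which $M_\alpha^{\mathcal D}(f\sigma)\lesssim\langle f\sigma\rangle_{Q,\alpha}$, and with $\bigcup_{Q\in\mathcal S}E_Q=\{M_\alpha^{\mathcal D}(f\sigma)>0\}$, so that
\[
\|M_\alpha^{\mathcal D}(f\sigma)\|_{L^q(w)}^{q}\;\lesssim\;\sum_{Q\in\mathcal S}\langle f\sigma\rangle_{Q,\alpha}^{q}\,w(E_Q).
\]

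Now unwind $\beta$: a direct computation gives the identity
\[
\langle f\sigma\rangle_{Q,\alpha}^{q}\,w(Q)\;=\;\beta(Q)^{q}\,\frac{\sigma(Q)^{q/p}(\langle f\rangle_Q^\sigma)^{q}}{\rho_\sigma(Q)^{q/p}\,\epsilon_q(\rho_\sigma(Q))^{q}}\,.
\]
Using $\beta(Q)\le\mbump{\sigma}{w}$ and $w(E_Q)\le w(Q)$ (harmless because $\mathcal S$ is sparse), the theorem is reduced to the \emph{entropy sum estimate}
\[
\mathfrak S\;:=\;\sum_{Q\in\mathcal S}\frac{\sigma(Q)^{q/p}(\langle f\rangle_Q^\sigma)^{q}}{\rho_\sigma(Q)^{q/p}\,\epsilon_q(\rho_\sigma(Q))^{q}}\;\lesssim\;\|f\|_{L^p(\sigma)}^{q}.
\]

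For this last estimate I would invoke the entropy machinery of Treil--Volberg, in the streamlined form of Lacey and the second author. Since $\rho_\sigma(Q)\geq1$ always, split $\mathcal S$ into layers $\mathcal F_k=\{Q\in\mathcal S:\rho_\sigma(Q)\in[2^k,2^{k+1})\}$, $k\geq0$; on $\mathcal F_k$ monotonicity of $\epsilon_q$ gives $\rho_\sigma(Q)^{-q/p}\epsilon_q(\rho_\sigma(Q))^{-q}\le 2^{-kq/p}\epsilon_q(2^k)^{-q}$. The combinatorial heart is the layerwise Carleson bound
\[
\sum_{\substack{Q\in\mathcal S,\ Q\subseteq R\\ \rho_\sigma(Q)\le\tau}}\sigma(Q)\;\lesssim\;\tau\,\sigma(R)\qquad(R\in\mathcal D,\ \tau\geq1),
\]
which says that a sparse family on which $\sigma$ is ``almost $A_\infty$ with constant at most $\tau$'' can only accumulate $\sigma$--mass $O(\tau\,\sigma(R))$. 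Granting it, the weighted dyadic Carleson embedding theorem applied on each layer gives $\sum_{Q\in\mathcal F_k}\sigma(Q)(\langle f\rangle_Q^\sigma)^{p}\lesssim 2^k\|f\|_{L^p(\sigma)}^{p}$, and since $q\geq p$ the inclusion $\ell^1\hookrightarrow\ell^{q/p}$ upgrades this to $\sum_{Q\in\mathcal F_k}\sigma(Q)^{q/p}(\langle f\rangle_Q^\sigma)^{q}\lesssim 2^{kq/p}\|f\|_{L^p(\sigma)}^{q}$. Summing over the layers,
\[
\mathfrak S\;\lesssim\;\|f\|_{L^p(\sigma)}^{q}\sum_{k\geq0}\epsilon_q(2^k)^{-q}\;\lesssim\;\|f\|_{L^p(\sigma)}^{q}\int_1^\infty\frac{dt}{t\,\epsilon_q^q(t)}\;=\;\|f\|_{L^p(\sigma)}^{q},
\]
which completes the proof. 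The main obstacle is precisely that layerwise Carleson bound — the geometric statement about how much a (sparse) dyadic family with $\rho_\sigma\leq\tau$ can overlap in $\sigma$--measure — together with matching up the exponents so that the $L^p$ Carleson input, the $\ell^1\hookrightarrow\ell^{q/p}$ step, and the normalization of $\epsilon_q$ fit. The remaining points are routine: the dyadic reduction, the extraction of the sparse family in the fractional setting, and truncating $f$ (and exhausting $\mathcal D$) so that all sums are finite.
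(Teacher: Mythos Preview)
Your plan is correct and matches the paper's argument in all essentials: dyadic reduction, sparse linearization of $M_\alpha$, the identity that turns $\langle f\sigma\rangle_{Q,\alpha}^q w(Q)$ into $\beta(Q)^q$ times $\sigma(Q)^{q/p}(\langle f\rangle_Q^\sigma)^q/[\rho_\sigma(Q)^{q/p}\epsilon_q(\rho_\sigma(Q))^q]$, a decomposition into layers $\rho_\sigma(Q)\simeq 2^k$, and the summation over layers via $\int_1^\infty t^{-1}\epsilon_q^{-q}(t)\,dt=1$.

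Two small organizational differences are worth noting. The paper first slices on the size of $\beta(Q)$, setting $\mathcal Q_k=\{Q:\beta(Q)\simeq 2^{-k}\mbump{\sigma}{w}\}$, and then verifies a $(p,q)$--Carleson condition to invoke the off-diagonal embedding directly; you instead bound $\beta(Q)\le\mbump{\sigma}{w}$ once, verify the diagonal $(p,p)$--Carleson condition on each $\rho_\sigma$--layer, and then upgrade via $\ell^1\hookrightarrow\ell^{q/p}$. Your route is slightly more streamlined (the $\beta$--slicing is not actually needed), but the two are equivalent in substance. The ``layerwise Carleson bound'' $\sum_{Q\in\mathcal S,\,Q\subseteq R,\,\rho_\sigma(Q)\le\tau}\sigma(Q)\lesssim\tau\,\sigma(R)$ that you leave as a black box is precisely the paper's estimate \eqref{fm2} (with $q=p$), and its proof is short: pass to the maximal cubes $S^\ast$ in the layer inside $R$, use sparseness to write $\sigma(Q)\lesssim\int_{E_Q}M(\one_{S^\ast}\sigma)$, sum over the disjoint $E_Q\subseteq S^\ast$ to get $\int_{S^\ast}M(\one_{S^\ast}\sigma)=\rho_\sigma(S^\ast)\,\sigma(S^\ast)\le\tau\,\sigma(S^\ast)$, and then sum over the disjoint $S^\ast\subseteq R$. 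You should include this rather than grant it.
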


The corresponding theorem for the fractional integral operator is:
\begin{thm}\label{frac1bump}
Let $1\leq p \leq \infty$ and $\sigma$ and $w$ be two weights and let $I_\alpha$ be the
fractional integral operator. Let $\epsilon_p$ be a monotonic increasing function on
$(1,\infty)$ that satisfies $\int_{1}^{\infty}\frac{dt}{t\epsilon_p^p(t)}=1$ 
and similarly for $\epsilon_{q'}$. Define:
\begin{align*}
\beta(Q)
:=\frac{\sigma(Q)^{1/p'}w(Q)^{1/q}}{\abs{Q}^{1-\alpha/n}}
    \rho_\sigma(Q)^{1/p}\epsilon_p(\rho_{\sigma}(Q))
    \rho_{w}(Q)^{1/q'}\epsilon_{q'}(\rho_w(Q)),
\end{align*}
and set $\cbump{\sigma}{w} := \sup_{Q\in\mathcal{Q}}\beta(Q)$.  Then
\begin{align*}
\norm{I_\alpha (f\sigma)}_{L^q(w)}
    \lesssim C_{\alpha,n} \cbump{\sigma}{w} \norm{f}_{L^p(\sigma)}.
\end{align*}
\end{thm}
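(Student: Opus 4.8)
The plan is to run the sparse‑domination argument of Lacey and the second author \cite{LacSpe}, with the modifications forced by the fractional kernel and by the inhomogeneous exponents $p\le q$. We may assume $1<p\le q<\infty$ (the extreme exponents being handled directly) and $f\ge0$, since the kernel of $I_\alpha$ is nonnegative. The first step is the by now standard pointwise sparse bound for the fractional integral: there is a sparse family $\mathcal{S}$, which after the usual $3^n$-grid reduction we may take inside one dyadic grid, such that $I_\alpha h\lesssim_{\alpha,n}\sum_{Q\in\mathcal{S}}\abs{Q}^{\alpha/n}\langle h\rangle_Q\one_Q=:\mathcal{A}_\alpha^{\mathcal{S}}h$ for $h\ge0$, where $\langle h\rangle_Q=\abs{Q}^{-1}\int_Q h$. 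It therefore suffices to prove $\norm{\mathcal{A}_\alpha^{\mathcal{S}}(f\sigma)}_{L^q(w)}\lesssim\cbump{\sigma}{w}\norm{f}_{L^p(\sigma)}$ uniformly over sparse families (the constant $C_{\alpha,n}$ in the theorem being the price of the sparse domination, and the supremum defining $\cbump{\sigma}{w}$ being over all cubes, hence over the dyadic ones).

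By duality for $L^q(w)$ it is enough to bound, for nonnegative $f,g$ with $\norm{f}_{L^p(\sigma)}=\norm{g}_{L^{q'}(w)}=1$, the bilinear form
\[ \mathcal{B}:=\sum_{Q\in\mathcal{S}}\abs{Q}^{\alpha/n-1}\sigma(Q)w(Q)\,\langle f\rangle^\sigma_Q\,\langle g\rangle^w_Q ,\qquad \langle f\rangle^\sigma_Q:=\tfrac{1}{\sigma(Q)}\int_Q f\,\sigma , \]
and $\langle g\rangle^w_Q$ analogously. The algebraic identity
\[ \abs{Q}^{\alpha/n-1}\sigma(Q)w(Q)\,\langle f\rangle^\sigma_Q\,\langle g\rangle^w_Q=\beta(Q)\cdot\frac{\sigma(Q)^{1/p}\langle f\rangle^\sigma_Q}{\rho_\sigma(Q)^{1/p}\epsilon_p(\rho_\sigma(Q))}\cdot\frac{w(Q)^{1/q'}\langle g\rangle^w_Q}{\rho_w(Q)^{1/q'}\epsilon_{q'}(\rho_w(Q))}, \]
the bound $\beta(Q)\le\cbump{\sigma}{w}$, and Hölder's inequality in $\ell^p(\mathcal{S})\times\ell^{p'}(\mathcal{S})$ reduce the problem to the two ``entropy'' square‑function estimates
\[ \sum_{Q\in\mathcal{S}}\frac{\sigma(Q)\big(\langle f\rangle^\sigma_Q\big)^p}{\rho_\sigma(Q)\,\epsilon_p(\rho_\sigma(Q))^p}\lesssim\norm{f}_{L^p(\sigma)}^p ,\qquad \sum_{Q\in\mathcal{S}}\frac{w(Q)\big(\langle g\rangle^w_Q\big)^{q'}}{\rho_w(Q)\,\epsilon_{q'}(\rho_w(Q))^{q'}}\lesssim\norm{g}_{L^{q'}(w)}^{q'} ; \]
the second of these controls the $\ell^{p'}$‑norm of the $g$‑factors because $q'\le p'$ when $p\le q$, so $\ell^{q'}(\mathcal{S})\hookrightarrow\ell^{p'}(\mathcal{S})$ contractively.

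The heart of the matter is the entropy estimate, which I would isolate as a lemma (the fractional analogue of the embedding theorem in \cite{LacSpe, TreVol}). The two observations driving it are that $\rho_\sigma(Q)\ge1$ for every cube, and that for a sparse family the sequence $\{\sigma(Q)\}_{Q\in\mathcal{S}}$ is Carleson only with a scale‑dependent constant: using that on the disjoint sets $E_{Q'}\subseteq Q'$ (with $\abs{E_{Q'}}\gtrsim\abs{Q'}$) one has $\langle\sigma\rangle_{Q'}\le M(\sigma\one_{Q_0})$, one gets $\sum_{Q'\in\mathcal{S},\,Q'\subseteq Q_0}\sigma(Q')\lesssim\sigma(Q_0)\rho_\sigma(Q_0)$. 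Splitting $\mathcal{S}=\bigsqcup_{m\ge0}\mathcal{S}_m$ with $\mathcal{S}_m=\{Q:2^m\le\rho_\sigma(Q)<2^{m+1}\}$, the sequence $\{\sigma(Q)\}_{Q\in\mathcal{S}_m}$ is genuinely $\sigma$‑Carleson with constant $\lesssim2^m$, so the classical weighted dyadic Carleson embedding theorem (whose constant is linear in the Carleson constant) gives $\sum_{Q\in\mathcal{S}_m}\sigma(Q)(\langle f\rangle^\sigma_Q)^p\lesssim2^m\norm{f}_{L^p(\sigma)}^p$, whence $\sum_{Q\in\mathcal{S}_m}\rho_\sigma(Q)^{-1}\epsilon_p(\rho_\sigma(Q))^{-p}\sigma(Q)(\langle f\rangle^\sigma_Q)^p\lesssim\epsilon_p(2^m)^{-p}\norm{f}_{L^p(\sigma)}^p$. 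Summing in $m$ and using that $\epsilon_p$ is increasing, $\sum_{m\ge0}\epsilon_p(2^m)^{-p}\lesssim\int_1^\infty\frac{dt}{t\,\epsilon_p^p(t)}=1$, which closes the estimate; the $w$‑estimate is identical with $(p,\sigma,\epsilon_p)$ replaced by $(q',w,\epsilon_{q'})$, and combining the two with the reductions above yields $\mathcal{B}\lesssim\cbump{\sigma}{w}$.

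The main obstacle is precisely this entropy embedding: recognizing $\rho_\sigma$ (respectively $\rho_w$) as the correct gauge of how badly a sparse family fails to be $\sigma$‑Carleson (respectively $w$‑Carleson) scale by scale, and then organizing the dyadic‑in‑$\rho$ decomposition so that the normalization $\int_1^\infty\frac{dt}{t\,\epsilon^p(t)}=1$ exactly absorbs the geometric losses $2^m$. By contrast, the features specific to the present setting are routine: the factor $\abs{Q}^{\alpha/n}$ only alters the ``$A_p$‑type'' scalar $\abs{Q}^{\alpha/n-1}\sigma(Q)^{1/p'}w(Q)^{1/q}$ sitting inside $\beta(Q)$, the inhomogeneity $p\le q$ is disposed of by the elementary embedding $\ell^{q'}\hookrightarrow\ell^{p'}$ used above, and the passage from $I_\alpha$ to the sparse operator $\mathcal{A}_\alpha^{\mathcal{S}}$ is classical.
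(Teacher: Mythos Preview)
Your argument is correct and follows essentially the same route as the paper: reduce to a sparse operator, dualize to a bilinear form, factor out $\beta(Q)\le\cbump{\sigma}{w}$, apply H\"older with exponents $(p,p')$, and reduce to two Carleson-type entropy embeddings proved by slicing on the dyadic level of $\rho_\sigma$ (resp.\ $\rho_w$) and invoking $\int_1^\infty \frac{dt}{t\,\epsilon^p(t)}<\infty$. The only cosmetic difference is in handling the off-diagonal exponent: you use the contractive embedding $\ell^{q'}\hookrightarrow\ell^{p'}$ and then the diagonal $(q',q')$ Carleson embedding for the $g$-sum, while the paper applies the off-diagonal $(q',p')$ Carleson embedding (Lemma~\ref{cet}) directly; both amount to the same thing.
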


The $C_{\alpha,n}$ constant in the above and below theorems arise below in $\eqref{can}$.  

The condition in the next theorem is called a ``separated bump'' for obvious reasons. 
We use a bump defined in terms of the fractional maximal operator, namely 
\[\varrho_\sigma^{\alpha,p,q}(Q):= \dfrac{\int_Q M_\alpha(\one_Q \sigma)^{q/p} dx}{\sigma(Q)^{q/p}},\]
or simply $\varrho_\sigma$ or $\varrho$ when clear.  We have the following

\begin{thm}\label{sbump}  Let $\sigma$ and $w$ be weights with densities, $1<p\leq q<\infty$, and $\eps_q,\eps_{p'}:\R^+\to\R$ be nonincreasing on $(0,1)$ and nondecreasing on $(1,\infty)$ such that $\int_0^\infty \frac{dt}{t\eps^{1/q}_q(t)},\int_0^\infty \frac{dt}{t\eps^{1/p'}_{p'}(t)} <\infty$.  Define 

\begin{align*} \sbump{\sigma}{w}_{\alpha,p,q}:=\sup_{Q \text{ a cube}}  \left(|Q|^{\alpha/n}\avg{\sigma}_Q\right)^{q/p'} \avg{w}_Q \varrho_\sigma^{\alpha,p,q}(Q)\eps_q\left(\varrho_\sigma^{\alpha,p,q}(Q)\right).
\end{align*}  There holds:

\begin{align*}
\|I_\alpha^\sigma:L^p(\sigma)\to L^q(w)\|\lesssim C_{\alpha,n} \left( \sbump{\sigma}{w}_{\alpha,p,q}^{1/q} + \sbump{w}{\sigma}_{\alpha,q',p'}^{1/p'} \right).
\end{align*}
\end{thm}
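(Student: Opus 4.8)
The plan is to reduce the operator bound for $I_\alpha$ to a bound for a fractional sparse form and then run a corona/stopping-time argument of the type used in \cite{LacSpe}, with the novelty being that the entropy factor is carried by the fractional maximal bump $\varrho_\sigma^{\alpha,p,q}$ rather than by the Hardy--Littlewood one. First I would invoke the by-now standard pointwise domination of $I_\alpha$ by a finite sum of dyadic fractional sparse operators $I_\alpha^{\mathcal S}f := \sum_{Q\in\mathcal S}|Q|^{\alpha/n}\avg{f}_Q\one_Q$, so that it suffices to prove
\begin{align*}
\sum_{Q\in\mathcal S}|Q|^{\alpha/n}\avg{f\sigma}_Q\,\avg{g w}_Q\,|Q|\lesssim C_{\alpha,n}\Bigl(\sbump{\sigma}{w}_{\alpha,p,q}^{1/q}+\sbump{w}{\sigma}_{\alpha,q',p'}^{1/p'}\Bigr)\|f\|_{L^p(\sigma)}\|g\|_{L^{q'}(w)}
\end{align*}
for every sparse collection $\mathcal S$ and every pair of nonnegative $f,g$; here $C_{\alpha,n}$ absorbs the number of dyadic grids and the constant from \eqref{can}. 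By the usual sparseness device one replaces $\avg{f\sigma}_Q$ and $\avg{g w}_Q$ by their stopping values on $\mathcal S$-trees.

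Next I would set up two interleaved stopping-time families. Running the corona decomposition for $f$ with respect to $\sigma$ produces a sparse family $\mathcal F$ of $\sigma$-Carleson stopping cubes on which $\avg{f\sigma}_Q/\sigma(Q)$ roughly doubles, and similarly a family $\mathcal G$ for $g$ with respect to $w$; one also stops when the quantities $\varrho_\sigma^{\alpha,p,q}(Q)$ (resp. $\varrho_w^{\alpha,q',p'}(Q)$) double, which is the standard trick for linearising the entropy function. Splitting $\mathcal S$ according to whether the nearest $\mathcal F$-stopping cube is smaller or larger than the nearest $\mathcal G$-stopping cube reduces matters to a geometric sum over, say, $F\in\mathcal F$ of local pieces $\sum_{Q\in\mathcal S,\ \pi_{\mathcal F}Q=F}|Q|^{\alpha/n}\avg{f\sigma}_Q\avg{g w}_Q|Q|$. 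On such a piece $\avg{f\sigma}_Q\lesssim \avg{f\sigma}_F$, the cubes $Q$ with a fixed $\mathcal G$-parent $G\subset F$ have $\avg{g w}_Q\lesssim \avg{g w}_G$, and the sparseness gives $\sum_{Q:\,\pi_{\mathcal G}Q=G}|Q|^{\alpha/n}|Q|\lesssim \int_G M_\alpha(\one_G w)^{q/p}\,\cdot$-type quantities — this is exactly where the fractional bump $\varrho$ must appear, because the natural quantity controlling $\sum |Q|^{\alpha/n}|Q|$ over a tree is a fractional maximal integral rather than the $\sigma(Q)$ that shows up in the non-fractional case. Collecting, the local sum is bounded by $\sbump{\sigma}{w}_{\alpha,p,q}$ (or its dual) times $\varrho_\sigma(F)^{-1}\eps_q(\varrho_\sigma(F))^{-1}$ times $\sigma(F)$-normalised stopping data, and the entropy integrability hypothesis $\int_0^\infty dt/(t\,\eps_q^{1/q}(t))<\infty$ makes the resulting sum over the doubling scales of $\varrho_\sigma$ converge.

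The final step is to run Hölder across the two Carleson families: with $a_F:=\avg{f\sigma}_F^p\sigma(F)$ and $b_G:=\avg{g w}_G^{q'}w(G)$ one uses $\sum a_F\lesssim \|f\|_{L^p(\sigma)}^p$ and $\sum b_G\lesssim \|g\|_{L^{q'}(w)}^{q'}$ (Carleson embedding for the stopping families), together with the two testing-type estimates coming from $\sbump{\sigma}{w}_{\alpha,p,q}$ and $\sbump{w}{\sigma}_{\alpha,q',p'}$, to close the bound. I expect the main obstacle to be the correct packaging of the fractional bump: one has to verify that the entropy-linearisation stopping rule on $\varrho_\sigma^{\alpha,p,q}$ is compatible with the $\sigma$-corona (so that the two stopping families can be merged into a single sparse family without losing the Carleson property), and that the exponent bookkeeping $q/p$ inside $\varrho$ together with the split of $|Q|^{\alpha/n}$ produces precisely the powers $q/p'$ on $|Q|^{\alpha/n}\avg{\sigma}_Q$ and $1$ on $\avg{w}_Q$ that appear in $\sbump{\sigma}{w}_{\alpha,p,q}$; the case $p<q$ (as opposed to $p=q$) is where this is most delicate, since then one cannot simply dualise and must track the mismatch between the $L^p$ and $L^q$ normalisations throughout. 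Everything else is a routine adaptation of the arguments in \cite{LacSpe} and \cite{TreVol} to the fractional scaling.
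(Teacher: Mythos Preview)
Your reduction to sparse operators and the entropy-linearisation philosophy are fine, but the bilinear parallel-corona scheme you outline is the natural route for the \emph{joint} bump of Theorem~\ref{frac1bump}, not for the separated bump here, and the key step in your sketch is misidentified. You write that ``sparseness gives $\sum_{Q:\pi_{\mathcal G}Q=G}|Q|^{\alpha/n}|Q|\lesssim \int_G M_\alpha(\one_G w)^{q/p}$-type quantities'' and that this is ``exactly where the fractional bump $\varrho$ must appear.'' But $\sum_{Q\subset G,\,Q\text{ sparse}}|Q|^{\alpha/n}|Q|$ is a purely Lebesgue quantity, bounded by $|G|^{1+\alpha/n}$ via a geometric series; no maximal function is needed, and in any case the bump in $\sbump{\sigma}{w}_{\alpha,p,q}$ is $\varrho_\sigma$, built from $M_\alpha(\one_Q\sigma)$, not from $M_\alpha(\one_Q w)$. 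So neither the quantity nor the weight is right, and the deferred ``exponent bookkeeping'' cannot repair this: the separated bump $\sbump{\sigma}{w}$ is one-sided (it bumps only $\sigma$), and a bilinear form carrying both $f$ and $g$ does not naturally produce a one-sided bound without first passing through testing.

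The paper's argument does go through testing (Lemma~\ref{testingdyad}): it suffices to bound $\int_{Q_0}\bigl(\sum_{Q\subset Q_0}|Q|^{\alpha/n}\avg{\sigma}_Q\one_Q\bigr)^q w\,dx$ by $\sbump{\sigma}{w}\,\sigma(Q_0)^{q/p}$. The crucial device, absent from your sketch, is to pigeonhole simultaneously on $\beta(Q)\simeq 2^a$ \emph{and} $\varrho_\sigma(Q)\simeq 2^r$, and then build a stopping family $\mS$ on the fractional average $|Q|^{\alpha/n}\avg{\sigma}_Q$. Inside a fixed $(a,r)$-piece these three constraints force $\avg{w}_Q$ to be essentially constant; this is what permits the distributional estimate (Lemma~\ref{distest}), transferring the Lebesgue-measure sparseness bound $|\{\Phi_S>\lambda\cdot 2^{-k}|S|^{\alpha/n}\avg{\sigma}_S\}|\lesssim 2^{-\lambda}|S|$ to the corresponding $w$-measure bound for free. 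That yields $\bigl\|\sum_{Q\in\mQ^{a,r}}|Q|^{\alpha/n}\avg{\sigma}_Q\one_Q\bigr\|_{L^q(w)}^q\lesssim \sum_{S\in\mS}|S|^{q\alpha/n}\avg{\sigma}_S^q w(S)$, and only \emph{now} does $\varrho_\sigma$ enter: rewriting $w(S)$ via $\beta(S)\simeq 2^a$ and using $(|S|^{\alpha/n}\avg{\sigma}_S)^{q/p}|E_S|\le\int_{E_S}M_\alpha(\one_{S^*}\sigma)^{q/p}$ together with $\varrho_\sigma(S^*)\simeq 2^r$ converts the sum to $2^a\eps_q(2^r)^{-1}\sigma(Q_0)^{q/p}$, after which the entropy hypothesis sums the $r$-series and a geometric series handles $a$.
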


In Section 2, we give some preliminary information and lemmas that will be used below. 
In Section 3, we give a proof of the weak estimates. Sections 4 and 5 contain the proofs of
the one--bump theorems for the fractional maximal and fractional integral operators. The 
proofs in these sections use the theory of sparse operators, discussed below, but avoid the
explicit use of testing inequalities. Finally, Section 6 contains the proof of the 
separated bump theorem for the fractional integral operator. The proof uses both 
sparse operators and testing inequalities but is still elementary.

\section{Preliminaries}
In this section, we list several well--known results; we include some proofs because
we could not find them in the literature.  We start with some familiar definitions. 
For a measure $\mu$,  will write $\avg{f}_Q^\mu$ for $\frac{1}{\mu(Q)}\int_{Q}f$ and 
$\avg{f}_Q$ when $\mu$ is Lebesgue measure. 
\begin{defn}
A collection, $\md$ of cubes is said to be a \textit{dyadic grid} if:
\begin{itemize}
\item [(i)] The side length of every $Q\in\md$ equals $2^k$ for some
    $k\in\mathbb{Z}$.
\item [(ii)] If $Q,R\in\md$ and $Q\cap R$ is not empty then either
    $Q\subset R$ or $R\subset Q$.
\item [(iii)] If $\m{D}_k=\{Q\in\m{D}:\text{the side length of Q equals }2^k\}$, then
    $\mathbb{R}^n=\cup_{Q\in\m{D}_k}Q$.
\end{itemize}
\end{defn}

\begin{defn}
A subset $\m{S}$ of a dyadic grid is said to be sparse if
for every $P\in\m{S}$ there holds:
\begin{align*}
\sum\limits_{\substack{Q\in\md:Q\subsetneq P \\ Q\text{ is maximal}}}\abs{Q}\leq \frac{1}{2}\abs{P}.
\end{align*}
\end{defn}

\begin{defn}
Given a measure $\mu$ on $\mathbb{R}^n$ and a dyadic grid, $\md$, a sequence of positive numbers, 
$\{a_Q\}_{Q\in\mathcal{D}}$, is called a
$p,q$--\textit{Carleson Sequence} if for every $P\in\md$,
\begin{align}\label{cardef}
\frac{1}{\mu(P)^{q/p}}\sum_{Q\in\md:Q\subset P}a_Q \lesssim 1.
\end{align}
\end{defn}

\begin{lm}\label{cet}
Let $\mu$ be a measure on $\mathbb{R}^n$, $\md$ be a dyadic grid, and $\{a_Q\}_{Q\in\md}$ be
a $p,q$--Carleson Sequence. If $1 < p \leq q < \infty$, there holds:
\begin{align*}
\sum_{Q\in\mathcal{D}} a_Q\left(\avg{f}_Q^\mu\right)^{q}
\lesssim \norm{f}_{L^p(\mu)}^q,
\end{align*}
where the implied constant depends on $p,q$ and the best constant in \eqref{cardef}.
\end{lm}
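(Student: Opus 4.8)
The plan is to prove Lemma~\ref{cet} by the standard ``linearization + maximal function'' argument for Carleson sequences. First I would reduce to the case where $f \geq 0$ and where the sum is taken over a finite subcollection $\mathcal{F} \subset \mathcal{D}$, so that all manipulations are justified and we may pass to the supremum at the end. The key observation is that $\sum_{Q \in \mathcal{D}} a_Q (\avg{f}_Q^\mu)^q = q \int_0^\infty \lambda^{q-1} \sum_{Q : \avg{f}_Q^\mu > \lambda} a_Q \, d\lambda$ by the layer-cake formula, so matters are reduced to a good bound on $A(\lambda) := \sum_{Q : \avg{f}_Q^\mu > \lambda} a_Q$.

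To estimate $A(\lambda)$, let $\mathcal{Q}_\lambda$ denote the maximal cubes $P \in \mathcal{D}$ with $\avg{f}_P^\mu > \lambda$; these are pairwise disjoint, and every cube $Q$ with $\avg{f}_Q^\mu > \lambda$ is contained in exactly one such $P$. Hence $A(\lambda) = \sum_{P \in \mathcal{Q}_\lambda} \sum_{Q \subset P} a_Q \lesssim \sum_{P \in \mathcal{Q}_\lambda} \mu(P)^{q/p}$ by the Carleson condition~\eqref{cardef}. Now I would invoke the dyadic maximal function $M^\mu_{\mathcal{D}} f := \sup_{Q \in \mathcal{D}} \avg{f}_Q^\mu \one_Q$: the union $\bigcup_{P \in \mathcal{Q}_\lambda} P$ is exactly $\{ M^\mu_{\mathcal{D}} f > \lambda\}$, so $\sum_{P \in \mathcal{Q}_\lambda} \mu(P) = \mu(\{M^\mu_{\mathcal{D}} f > \lambda\})$. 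Since $q/p \geq 1$, I can bound $\sum_P \mu(P)^{q/p} \leq \big(\sum_P \mu(P)\big)^{q/p} = \mu(\{M^\mu_{\mathcal{D}} f > \lambda\})^{q/p}$, using that for nonnegative terms the $\ell^{q/p}$ norm is dominated by the $\ell^1$ norm.

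Putting these together gives
\begin{align*}
\sum_{Q \in \mathcal{D}} a_Q (\avg{f}_Q^\mu)^q
\lesssim q \int_0^\infty \lambda^{q-1} \mu\!\left(\{M^\mu_{\mathcal{D}} f > \lambda\}\right)^{q/p} d\lambda.
\end{align*}
The remaining step is to recognize the right-hand side: writing $g(\lambda) := \mu(\{M^\mu_{\mathcal{D}} f > \lambda\})$, the integral is $q \int_0^\infty \lambda^{q-1} g(\lambda)^{q/p}\, d\lambda$, which up to the constant $q$ is the $L^{q,\,p}$-Lorentz-type quasinorm of $M^\mu_{\mathcal{D}} f$ raised to the power $q$; more simply, since $q/p \geq 1$ one has $\int_0^\infty \lambda^{q-1} g(\lambda)^{q/p} d\lambda \lesssim \big(\int_0^\infty \lambda^{p-1} g(\lambda)\, d\lambda\big)^{q/p} = \big(\tfrac1p \|M^\mu_{\mathcal{D}} f\|_{L^p(\mu)}^p\big)^{q/p}$ by the embedding $\ell^{q/p} \hookrightarrow \ell^1$ applied at the level of the measure $\lambda^{p-1} d\lambda$ after a change of variables, or equivalently by Hardy's inequality / the reverse Minkowski inequality for $L^{1/(p/q)}$. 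Finally the dyadic maximal theorem (the $\mu$-weighted weak $(1,1)$ and strong $(p,p)$ bounds, $p>1$) gives $\|M^\mu_{\mathcal{D}} f\|_{L^p(\mu)} \lesssim_p \|f\|_{L^p(\mu)}$, yielding the claim with the implied constant depending only on $p$, $q$, and the Carleson constant.

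The main obstacle I anticipate is the clean handling of the exponent arithmetic in passing from $\int \lambda^{q-1} g(\lambda)^{q/p} d\lambda$ to $\|f\|_{L^p(\mu)}^q$: one must be careful that the inequality $\|\cdot\|_{\ell^{q/p}} \le \|\cdot\|_{\ell^1}$ is being applied to the correct measure space (here the measure $\lambda^{p-1}\,d\lambda$ on $(0,\infty)$, after substituting $\lambda^q = (\lambda^p)^{q/p}$), since $q/p \ge 1$ is exactly what makes this work and is where the hypothesis $p \le q$ is used. An alternative that sidesteps the Lorentz-space bookkeeping is to avoid the layer-cake entirely: stop the maximal-cube argument at the stopping-time generation, build a sparse family, and estimate $\sum_Q a_Q(\avg f^\mu_Q)^q$ by summing over stopping cubes with geometrically decaying averages; but the layer-cake route above is shorter and I would present that.
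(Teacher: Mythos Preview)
Your approach is essentially identical to the paper's: both establish the level-set estimate
\[
\sum_{Q:\,\avg{f}_Q^\mu>\lambda} a_Q \;\lesssim\; \mu\bigl(\{M_{\md}^\mu f>\lambda\}\bigr)^{q/p}
\]
via maximal cubes and the Carleson condition, then integrate/sum and invoke the $L^p(\mu)$-boundedness of the dyadic maximal function. The only difference is that the paper discretizes to levels $\lambda=2^k$, while you keep the continuous layer cake.

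Your anticipated obstacle is real, and your stated justification for it is not quite right. The inequality
\[
\int_0^\infty \lambda^{q-1} g(\lambda)^{q/p}\,d\lambda \;\lesssim\; \Bigl(\int_0^\infty \lambda^{p-1} g(\lambda)\,d\lambda\Bigr)^{q/p}
\]
is \emph{false} for general nonnegative $g$; it is not an instance of an $\ell^r$-embedding, since $(0,\infty)$ with $\lambda^{p-1}\,d\lambda$ has infinite mass. What saves you is that $g(\lambda)=\mu(\{M_{\md}^\mu f>\lambda\})$ is nonincreasing. After substituting $t=\lambda^p$ and setting $h(t)=g(t^{1/p})$, $s=q/p\ge 1$, you need $\int_0^\infty t^{s-1}h(t)^s\,dt\le\bigl(\int_0^\infty h\bigr)^s$; since $h$ is nonincreasing, $t\,h(t)\le\int_0^t h\le H:=\int_0^\infty h$, whence $h(t)^s\le h(t)(H/t)^{s-1}$ and the bound follows with constant $1$. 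Alternatively, and this is what the paper does, discretize: from $2^{kq}A(2^k)\lesssim\bigl(2^{kp}\mu(\{M_{\md}^\mu f>2^k\})\bigr)^{q/p}$ sum over $k\in\Z$ and use $\sum_k b_k^{q/p}\le\bigl(\sum_k b_k\bigr)^{q/p}$, which \emph{is} the honest sequence inequality $\|\cdot\|_{\ell^{q/p}}\le\|\cdot\|_{\ell^1}$ for $q/p\ge 1$.
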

\begin{proof}
We will treat $\mathcal{D}$ as a discrete measure space with measure $\nu$ where
$\nu(Q)=a_Q$. We show that the operator $T$ with rule $(Tf)(Q)=\avg{f}_Q^\mu$ satisfies $\norm{Tf}_{L^q(\nu)}^q\lesssim \norm{f}_{L^p(\mu)}^q$. The objective then
is to show that for every $\lambda > 0$, there holds:
\begin{align}\label{cetwww}
\lambda^q\nu(\{Tf>\lambda\}) \lesssim \left(\lambda^p\mu(Mf>\lambda)\right)^{q/p},
\end{align}
where $M$ is the dyadic maximal function. The lemma follows from \eqref{cetwww}
since the dyadic maximal function is bounded for $p>1$:
\begin{align*}
\norm{Tf}_{L^q(\nu)}^q
\simeq\sum_{k\in\mathbb{Z}}2^{kq}\nu(\{Tf>2^k\})
\lesssim \left(\sum_{k\in\mathbb{Z}}2^{kp}\mu(\{Mf>2^k\})\right)^{q/p}
\simeq \norm{Mf}_{L^p(\mu)}^{q/p}. 
\end{align*}
We now turn to proving 
\eqref{cetwww}. Fix $\lambda > 0$, and let 
$\mathcal{D}_\lambda$ be the maximal elements $Q\in \mathcal{D}$ such
that $\avg{f}_Q^{\mu} > \lambda$ (such maximal cubes exist since $f\in L^p(\mu)$). Using
the Carleson property of the sequence $\{a_Q\}_{Q\in\md}$, there holds:
\begin{align*}
\lambda^q\nu(\{Tf>\lambda\}) 
=\lambda^q\sum_{P\in\md_\lambda}\sum_{Q\in\md_\lambda:Q\subset P}a_Q
\leq \sum_{P\in\md_\lambda} (\lambda^p\mu(P))^{q/p}
\leq (\lambda^p \mu(\{Mf>\lambda\}))^{q/p}.
\end{align*}
The last inequality follows by the disjointness of the $P\in\md_\lambda$ and
the fact that $q/p\geq 1$.
\end{proof}
For the ``continuous'' version of this theorem, see \cite{Dur}. We are certain that 
Lemma \ref{cet} is contained in a paper, but we have not been able to find a
reference. 

For a given dyadic grid, $\mathcal{D}$, define the dyadic fractional maximal 
operator:
\begin{align*}
 M_{\alpha}^{\m{D}}f(x):=\sup_{Q\in\m{D}}\one_Q(x)\abs{Q}^{\alpha/n}\avg{f}_Q
\end{align*}
and the dyadic fractional integral operator:
\begin{align*}
 I_{\alpha}^{\m{D}}f(x):=\sum_{Q\in\m{D}}\abs{Q}^{\alpha/n}\avg{f}_Q\one_Q(x).
\end{align*}
The following lemma is well--known (for the proof of the fractional integral 
estimate see \cite{CruMoe1}; the proof of the estimate for the 
fractional maximal operator is obvious given the fact that for 
every cube, $Q$, there is a cube, $P_Q$ in a dyadic grid such that
$Q\subset P_Q$ and $\abs{P_Q}\leq 3^n \abs{Q}$):
\begin{lm}\label{mf2d}\label{ff2d}
Let $M_{\alpha}$ be the fractional maximal operator and $I_{\alpha}$ be the 
fractional integral operator. There is a collection of $3^n$
dyadic grids such that the following point--wise equivalences hold
for all non--negative $f$:
\begin{align*}
M_{\alpha}f\simeq\sum_{k=1}^{3^n}M_{\alpha}^{\m{D}_k}f
\hspace{.2in}
\text{ and }
\hspace{.2in}
I_{\alpha}f\simeq\sum_{k=1}^{3^n}I_{\alpha}^{\m{D}_k}f.
\end{align*}
\end{lm}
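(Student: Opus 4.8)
The plan is to build the $3^n$ grids by the standard ``one--third trick'' and then to verify each pointwise equivalence by separating the easy inequality from the one that genuinely uses the grids. Let $\m{D}_1,\dots,\m{D}_{3^n}$ be the classical family of shifted (``adjacent'') dyadic grids on $\R^n$, indexed by $\beta\in\{0,\frac13,\frac23\}^n$. The one geometric fact I would record first---this is the crux of the construction---is the covering property already quoted in the excerpt: for every cube $Q\subset\R^n$ there is an index $k$ and a cube $P\in\m{D}_k$ with $Q\subset P$ and $\ell(P)\le 3\,\ell(Q)$, so in particular $\abs{P}\le 3^n\abs{Q}$. Applying this to the cube centered at $x$ of side length $2\abs{x-y}$, which contains both $x$ and $y$, gives the form I will actually use: for every pair $x,y$ there is a grid $\m{D}_k$ containing a cube $P\ni x,y$ with $\ell(P)\lesssim\abs{x-y}$.

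For $M_\alpha$ both inequalities are immediate. Since every $P\in\m{D}_k$ is a genuine cube, $M_\alpha^{\m{D}_k}f\le M_\alpha f$ pointwise, and summing over the $3^n$ grids gives $\sum_k M_\alpha^{\m{D}_k}f\le 3^n M_\alpha f$. Conversely, fix $x$ and a cube $Q\ni x$; choosing $P\in\m{D}_k$ as above with $Q\subset P$ and $\abs{P}\le 3^n\abs{Q}$ and using $f\ge 0$, one has $\abs{Q}^{\alpha/n}\avg{f}_Q\le 3^{n-\alpha}\abs{P}^{\alpha/n}\avg{f}_P\le 3^{n-\alpha}M_\alpha^{\m{D}_k}f(x)$, because $x\in Q\subset P$. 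Taking the supremum over $Q\ni x$ yields $M_\alpha f(x)\lesssim\sum_k M_\alpha^{\m{D}_k}f(x)$.

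The fractional integral requires one small computation, which I regard as the real content. Fix a grid $\m{D}$ and $x,y\in\R^n$. The cubes $Q\in\m{D}$ containing both $x$ and $y$ are the tail of the nested chain of $\m{D}$--cubes through $x$, namely all those of side length $\ell_0(x,y),2\ell_0(x,y),4\ell_0(x,y),\dots$, where $\ell_0(x,y)$ is the side of the smallest such cube. Since $\abs{Q}^{\alpha/n-1}=\ell(Q)^{\alpha-n}$ and $\alpha-n<0$, the scale--sum is a convergent geometric series, and Tonelli gives
\[
I_\alpha^{\m{D}}f(x)=\sum_{\substack{Q\in\m{D}\\ Q\ni x}}\abs{Q}^{\alpha/n-1}\int_Q f
=\int_{\R^n}f(y)\sum_{\substack{Q\in\m{D}\\ Q\ni x,\,Q\ni y}}\ell(Q)^{\alpha-n}\,dy
=c_{n,\alpha}\int_{\R^n}f(y)\,\ell_0(x,y)^{\alpha-n}\,dy,
\]
with $c_{n,\alpha}=(1-2^{\alpha-n})^{-1}$. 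Both bounds now drop out. Since $x,y\in Q$ forces $\abs{x-y}\le\sqrt n\,\ell_0(x,y)$, we get $\ell_0(x,y)^{\alpha-n}\le(\sqrt n)^{n-\alpha}\abs{x-y}^{\alpha-n}$, so $I_\alpha^{\m{D}}f(x)\lesssim I_\alpha f(x)$ for each single grid; summing over $k$ gives $\sum_k I_\alpha^{\m{D}_k}f\lesssim I_\alpha f$. For the reverse, the covering fact of paragraph one supplies, for each pair $x,y$, a grid $\m{D}_k$ whose smallest cube through $x,y$ has side $\lesssim\abs{x-y}$, whence $\abs{x-y}^{\alpha-n}\lesssim\ell_0^{(k)}(x,y)^{\alpha-n}\le\sum_k\ell_0^{(k)}(x,y)^{\alpha-n}$; integrating against $f$ and invoking the displayed identity in each grid yields $I_\alpha f(x)\lesssim\sum_k I_\alpha^{\m{D}_k}f(x)$.

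The only genuinely nontrivial ingredients are the geometric covering property of the $3^n$ grids and the convergence of the scale--sum, which is precisely where the hypothesis $0<\alpha<n$ enters; everything else is bookkeeping. This reproves the fractional integral equivalence of \cite{CruMoe1} in a self--contained way and makes the maximal case transparent.
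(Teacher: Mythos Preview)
Your argument is correct. The paper itself does not prove this lemma: it calls the $M_\alpha$ case ``obvious'' from the covering property that every cube $Q$ sits inside some $P\in\m{D}_k$ with $\abs{P}\le 3^n\abs{Q}$, and for $I_\alpha$ it simply refers the reader to \cite{CruMoe1}. Your treatment of $M_\alpha$ is exactly the one--line argument the paper alludes to, and your $I_\alpha$ proof---rewriting $I_\alpha^{\m{D}}f(x)$ as an integral against the kernel $\sum_{Q\ni x,y}\ell(Q)^{\alpha-n}\simeq \ell_0(x,y)^{\alpha-n}$ and comparing $\ell_0(x,y)$ to $\abs{x-y}$ in both directions via the covering property---is the standard route (and essentially what one finds in \cite{CruMoe1}). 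So you have supplied a clean, self--contained proof where the paper only gives a pointer; nothing is missing or different in spirit.
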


\begin{rem}\label{reduction}
When proving the estimates below for the dyadic fractional maximal operator, it is more 
convenient to deal with the following truncated version:
\begin{align}\label{reduction1}
\one_{Q_0}(x)\sup_{Q\in\md:Q\subset Q_0}\qan \avg{f}_Q\one_Q(x).
\end{align}
We then prove estimates that are independent of $Q_0$ and appeal to the monotone
convergence theorem to conclude the desired results. Assuming that $f$ is finite 
almost everywhere (which will always be the case for us), we can further 
simplify matters. We start by building a stopping collection, $\m{S}$. Initialise
$\{Q_0\}\to\m{S}$, and in the recursive stage, if $P\in\m{S}$ is minimal, add to 
$\m{S}$ all maximal children $Q$ of $P$ such that $\qan\avg{f}_Q>4\abs{P}^{\alpha/n}\avg{f}_P$.
For a cube $Q\subset Q_0$, let $Q^S$ denote the $\m{S}$--parent of $Q$. Similarly, 
let $\text{ch}(S)$ denote the maximal $\m{S}$--descendants of $S$. Finally, 
let $E_Q=Q\setminus\text{ch}(Q)$. A simple computation shows that for every $S\in\m{S}$, 
\begin{align*}
\sum_{Q\in\text{ch}(S)}\abs{S} \leq \frac{1}{2}\abs{S}
\hspace{.25in}
\text{ and }
\hspace{.25in}
\abs{S}\leq 2 \abs{E_S}. 
\end{align*}
That is, the stopping collection $\m{S}$ is sparse. Additionally, the $E_Q$ are
pairwise disjoint and for almost every $x\in Q_0$ there is some $Q$ with $x\in E_Q$
(this follows from the fact that $f=\infty$ on a set of measure zero).
Thus, we may further reduce \eqref{reduction1} to:
\begin{align}\label{reduction2}
\one_{Q_0}(x)\sup_{Q\in\md:Q\subset Q_0}\qan \avg{f}_Q\one_Q(x)
=\sum_{Q\in\m{S}:Q\subset Q_0}\qan \avg{f}_Q\one_{E_Q}(x).
\end{align}
We also note that if $\{E_Q\}_{Q\in\m{D}}$ is any collection of pairwise
disjoint sets such that $E_Q\subset Q$, then 
$\sum_{Q\in\m{D}}\abs{Q}^{\alpha/n}\avg{f}_Q\one_{E_Q}(x) \leq M_\alpha f(x)$.  

There is a similar reduction for the dyadic fractional integral
operator. Again, we may reduce matters to:
\begin{align}\label{reduction3}
\one_{Q_0}(x)\sum_{Q\in\md:Q\subset Q_0}\qan \avg{f}_Q\one_Q(x).
\end{align}
We now create the stopping family by initialising $\{Q_0\}\to\m{S}$ and 
in the recursive stage, if $P\in\m{S}$ is minimal, add to 
$\m{S}$ all maximal children $Q$ of $P$ such that 
$\avg{f}_Q>4\avg{f}_P$. Note that we are stopping on \textit{averages},
not fractional averages.  Again, simple computations show that $\m{S}$ is sparse. For
fixed $x\in Q_0$, and fixed $S\in\m{S}$, the sequence $\{\qan \one_Q(x)\}_{Q^S=S}$
is geometric and so 
\begin{align}
\sum_{Q^S=S}\qan \one_Q(x)\simeq C_{\alpha,n} \abs{S}^{\alpha/n}\one_S(x).\label{can}
\end{align}
Therefore, the sum in \eqref{reduction3} can be estimated as:
\begin{align}\label{reduction4}
\sum_{S\in\m{S}}\sum_{Q^S=S}\qan \avg{f}_Q\one_Q(x)
\lesssim\sum_{S\in\m{S}}\avg{f}_S\sum_{Q^S=S}\qan \one_Q(x)
\lesssim \sum_{S\in\m{S}}\abs{S}^{\alpha/n}\avg{f}_S\one_S(x).
\end{align}

Therefore, in all estimates below, for fixed $f$, we can replace the operator
of interest with one from the right hand
side of \eqref{reduction2} or \eqref{reduction4}; our 
estimates will be independent of sparse collection $\mS$ and root $Q_0$.
\qed
\end{rem}

We have the following well--known theorem, originally due to 
Sawyer. See \cites{Saw1, Hyt, LacSawUri}.
\begin{lm}\label{testingdyad}
Let $1<p\leq q < \infty$, let $\md$ be a dyadic grid and let $\m{S}\subset \md$ be sparse. Let $T$ be the operator
given by $Tf=\sum_{Q\in\m{S}}\abs{Q}^{\alpha/n}\avg{f}_Q\one_Q$. Define:
\begin{align*}
\beta_1 :=
\sup_{P\in\m{S}}\frac{1}{\sigma(P)^{q/p}}\int_{P}
    \abs{\sum_{Q\in\m{S}:Q\subset P}\abs{Q}^{\alpha/n}\avg{\sigma}_Q\one_Q(x)}^{q}w(x)dx,
\end{align*}
\begin{align*}
\beta_2 :=
\sup_{P\in\m{S}}\frac{1}{w(P)^{p'/q'}}\int_{P}
    \abs{\sum_{Q\in\m{S}:Q\subset P}\abs{Q}^{\alpha/n}\avg{w}_Q\one_Q(x)}^{p'}\sigma(x)dx.
\end{align*}
Then:
\begin{align*}
\norm{T_\sigma:L^p(\sigma)\to L^q(w)}\lesssim \beta_1+\beta_2.
\end{align*}
\end{lm}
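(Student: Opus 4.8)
The plan is to prove Lemma \ref{testingdyad}, the two-weight testing characterization for sparse fractional operators due to Sawyer. I would establish this by a direct dualization-and-stopping-time argument. First, by duality it suffices to bound the bilinear form $\Lambda(f,g):=\sum_{Q\in\m{S}}\abs{Q}^{\alpha/n}\avg{f\sigma}_Q\,\avg{g w}_Q\,\abs{Q}$ for $f\in L^p(\sigma)$, $g\in L^{q'}(w)$ nonnegative, and show $\Lambda(f,g)\lesssim (\beta_1+\beta_2)\norm{f}_{L^p(\sigma)}\norm{g}_{L^{q'}(w)}$. Here one rewrites $\avg{f\sigma}_Q = \avg{f}_Q^\sigma \avg{\sigma}_Q$ and similarly for $g$, so that
\begin{align*}
\Lambda(f,g)=\sum_{Q\in\m{S}}\abs{Q}^{\alpha/n}\avg{\sigma}_Q\avg{w}_Q\abs{Q}\,\avg{f}_Q^\sigma\,\avg{g}_Q^w.
\end{align*}

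The key step is to run a \emph{double stopping-time} construction simultaneously adapted to $f$ (with respect to $\sigma$) and to $g$ (with respect to $w$). That is, one builds a sparse subfamily $\m{F}\subset\m{S}$ of ``stopping cubes'' such that on each stopping cube $F$, the averages $\avg{f}_Q^\sigma$ and $\avg{g}_Q^w$ of all $Q\in\m{S}$ sitting between $F$ and its $\m{F}$-children are comparable to $\avg{f}_F^\sigma$ and $\avg{g}_F^w$ respectively; stopping occurs when either average at least doubles. Standard sparseness of $\m{F}$ gives the Carleson-type packing $\sum_{F'\in\m{F},F'\subset F}\sigma(F')\lesssim \sigma(F)$ (and similarly for $w$). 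Organizing the sum over $\m{S}$ by which stopping cube each $Q$ belongs to, one gets
\begin{align*}
\Lambda(f,g)\lesssim \sum_{F\in\m{F}}\avg{f}_F^\sigma\,\avg{g}_F^w\sum_{\substack{Q\in\m{S}\\ \pi_{\m{F}}(Q)=F}}\abs{Q}^{\alpha/n}\avg{\sigma}_Q\avg{w}_Q\abs{Q}.
\end{align*}
The inner sum is precisely where the testing constants enter: one splits $\abs{Q}^{\alpha/n}\avg{\sigma}_Q\avg{w}_Q\abs{Q}=\bigl(\abs{Q}^{\alpha/n}\avg{\sigma}_Q\avg{w}_Q\abs{Q}\bigr)^{1/q'}\cdot\bigl(\abs{Q}^{\alpha/n}\avg{\sigma}_Q\avg{w}_Q\abs{Q}\bigr)^{1/q}$ and, after applying the elementary identity $\avg{\sigma}_Q\abs{Q}=\sigma(Q)$ etc., recognizes that summing one factor over the $Q$ stopped by $F$ is controlled by $\int_F \bigl(\sum_{Q\subset F}\abs{Q}^{\alpha/n}\avg{\sigma}_Q\one_Q\bigr)^q w\,dx\le \beta_1\,\sigma(F)^{q/p}$, and the other by $\int_F\bigl(\sum_{Q\subset F}\abs{Q}^{\alpha/n}\avg{w}_Q\one_Q\bigr)^{p'}\sigma\,dx\le\beta_2\,w(F)^{p'/q'}$ — an application of H\"older in the exponents $q$ and $q'$ on the counting-measure sum over the stopping generation, together with Lemma \ref{cet} (or just direct Carleson estimates) to pass from the localized integrals to $\sigma(F)$ and $w(F)$.

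Finally one assembles: the bound reduces to $\sum_{F\in\m{F}}\avg{f}_F^\sigma\avg{g}_F^w\,\sigma(F)^{\theta}w(F)^{1-\theta}\lesssim(\beta_1+\beta_2)\norm{f}_{L^p(\sigma)}\norm{g}_{L^{q'}(w)}$ for the appropriate exponent split, and this follows from H\"older's inequality (in $p$ and $p'$, say) combined with the Carleson embedding Lemma \ref{cet} applied to $f$ on $(\m{F},\sigma)$ and to $g$ on $(\m{F},w)$ — the sparseness of $\m{F}$ is exactly the Carleson condition needed. I expect the main obstacle to be bookkeeping the exponents correctly in the H\"older split of the inner sum so that the powers of $\sigma(F)$ and $w(F)$ match what the Carleson embedding for $f$ and $g$ can absorb; the $\alpha/n$ factors must be carried through cleanly, but since $\abs{Q}^{\alpha/n}\avg{\sigma}_Q\avg{w}_Q\abs{Q}$ is the natural quantity appearing in both $\beta_1$ and $\beta_2$, it distributes symmetrically and the argument parallels the classical $\alpha=0$ case. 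An alternative, slicker route would be to invoke the known equivalence (Lemma \ref{ff2d}) reducing $I_\alpha$-type estimates to the $\alpha=0$ dyadic setting via a change of measure, but carrying out the direct argument keeps the proof self-contained.
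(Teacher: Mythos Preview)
The paper does not prove this lemma: it is stated as a well--known result of Sawyer, with references to \cite{Saw1}, \cite{Hyt}, and \cite{LacSawUri}, and no proof is given. So there is no ``paper's own proof'' to compare against; your proposal is an attempt to supply one.

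Your overall architecture --- dualize to the bilinear form, run a corona/stopping decomposition, freeze the averages $\avg{f}_Q^\sigma$ and $\avg{g}_Q^w$ on each stopping cube, feed the inner sums into the testing constants, and close with the Carleson embedding (Lemma~\ref{cet}) --- is indeed the standard modern route to Sawyer--type testing theorems for positive dyadic operators, and is what one finds in the cited references. So the plan is sound.

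There is, however, a concrete gap in the step where you invoke the testing conditions. You propose to split
\[
\abs{Q}^{\alpha/n}\avg{\sigma}_Q\avg{w}_Q\abs{Q}=\bigl(\abs{Q}^{\alpha/n}\avg{\sigma}_Q\avg{w}_Q\abs{Q}\bigr)^{1/q'}\cdot\bigl(\abs{Q}^{\alpha/n}\avg{\sigma}_Q\avg{w}_Q\abs{Q}\bigr)^{1/q}
\]
and then H\"older over $Q$ in the counting measure. But this is vacuous: H\"older with exponents $q',q$ on $X^{1/q'}\cdot X^{1/q}$ just returns $\sum_Q X$. It does not produce the nonlinear quantity $\int_F\bigl(\sum_{Q\subset F}\abs{Q}^{\alpha/n}\avg{\sigma}_Q\one_Q\bigr)^q w$ that defines $\beta_1$. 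What actually works is to recognise that the inner sum is already a \emph{linear} integral,
\[
\sum_{\pi_{\m{F}}(Q)=F}\abs{Q}^{\alpha/n}\avg{\sigma}_Q\,w(Q)
=\int_F\Bigl(\sum_{\pi_{\m{F}}(Q)=F}\abs{Q}^{\alpha/n}\avg{\sigma}_Q\one_Q\Bigr)w\,dx,
\]
and then apply H\"older \emph{in the integral over $F$} (not over the index $Q$), yielding the bound $\beta_1^{1/q}\sigma(F)^{1/p}w(F)^{1/q'}$. After this correction the outer sum over $F\in\m{F}$ does close via H\"older (with exponents $p$, $q'$, and the residual exponent coming from $1/q-1/p\ge 0$) together with the Carleson embedding for both $f$ and $g$; the off--diagonal bookkeeping you flagged as the ``main obstacle'' is genuine but routine. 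Alternatively, and closer to \cite{LacSawUri} and \cite{Hyt}, one runs \emph{two separate} coronas (one for $f$, one for $g$) and splits the bilinear form according to which stopping parent is smaller; each piece then sees only one testing constant, which avoids juggling both $\beta_1$ and $\beta_2$ in the same inner sum.

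One small correction: your aside about Lemma~\ref{ff2d} is off --- that lemma reduces the continuous operators $M_\alpha$, $I_\alpha$ to their dyadic analogues over finitely many grids; it does not provide a change--of--measure reduction to the case $\alpha=0$.
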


\section{Proof of Theorem \ref{weakmax}}
By Lemma \ref{mf2d}, 
Theorem \ref{weakmax} follows from the following lemma.

\begin{lm}
Let $1\leq p \leq q < \infty$ and $\sigma$ and $w$ be two weights. Let $\md$ be a dyadic grid, 
and let $M_\alpha$ the dyadic fractional integral operator. Define:
\begin{align*}
\beta(Q)=\frac{\sigma(Q)^{1/p'}w(Q)^{1/q}\qan}{\abs{Q}}.
\end{align*}
Set $[\sigma,w]:=\sup_{Q\in\mathcal{D}}\beta(Q)$, then
\begin{align}\label{weakest1}
\lambda^q w(\{I_\alpha f>\lambda\}) \lesssim [\sigma,w]^q\norm{f}_{L^p(\sigma)}^q. 
\end{align}
\end{lm}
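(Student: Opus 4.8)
The plan is to run the classical Calder\'on--Zygmund stopping argument adapted to the measure $\sigma\,dx$ and then to close the estimate using H\"older's inequality, the definition of $[\sigma,w]$, and the elementary superadditivity of $\ell^{q/p}$ over pairwise disjoint cubes. (I read the statement as concerning the \emph{dyadic fractional maximal} operator $M_\alpha^{\m D}$ of Lemma~\ref{mf2d} applied to $f\sigma$, since that is the dyadic model which yields Theorem~\ref{weakmax} through Lemma~\ref{mf2d}.) If $[\sigma,w]=\infty$ or $\norm{f}_{L^p(\sigma)}=\infty$ there is nothing to prove, so I assume both are finite; since $M_\alpha(\sigma f)$ and $\norm{f}_{L^p(\sigma)}$ are unchanged on replacing $f$ by $\abs{f}$, I assume $f\geq0$. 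By the monotone convergence theorem it suffices to treat $f$ bounded with compact support, so that $f\sigma\in L^1(\R^n)$ and $\qan\avg{f\sigma}_Q=\abs{Q}^{\alpha/n-1}\int_Q f\sigma\to 0$ as $\abs{Q}\to\infty$ (here $\alpha<n$ is used); consequently every cube on which this quantity exceeds $\lambda$ lies inside a maximal such cube, which is all the legitimacy the stopping procedure needs. (Alternatively, one could truncate to cubes $Q\subseteq Q_0$ as in Remark~\ref{reduction} and let $Q_0$ exhaust $\R^n$.)

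Fix $\lambda>0$ and let $\md_\lambda$ be the family of maximal cubes $Q\in\md$ with $\qan\avg{f\sigma}_Q>\lambda$. By maximality together with property~(ii) of a dyadic grid the cubes of $\md_\lambda$ are pairwise disjoint, and since $M_\alpha^{\m D}(f\sigma)(x)>\lambda$ holds exactly when $x$ belongs to some $Q\in\md$ with $\qan\avg{f\sigma}_Q>\lambda$, we get $\{M_\alpha^{\m D}(f\sigma)>\lambda\}=\bigcup_{Q\in\md_\lambda}Q$ and hence
\[
w\big(\{M_\alpha^{\m D}(f\sigma)>\lambda\}\big)=\sum_{Q\in\md_\lambda}w(Q).
\]
The heart of the argument is the bound on a single stopping cube. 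For $Q\in\md_\lambda$, H\"older's inequality in the measure $\sigma\,dx$ (with the convention $1/p'=0$ when $p=1$) gives $\int_Q f\sigma\leq\sigma(Q)^{1/p'}\big(\int_Q f^p\sigma\big)^{1/p}$, so, recalling $\qan\avg{f\sigma}_Q=\abs{Q}^{\alpha/n-1}\int_Q f\sigma$,
\[
\lambda<\abs{Q}^{\alpha/n-1}\,\sigma(Q)^{1/p'}\Big(\int_Q f^p\sigma\Big)^{1/p}.
\]
Multiplying by $w(Q)^{1/q}$ and observing that the coefficient produced is exactly $\beta(Q)\leq[\sigma,w]$, we obtain $\lambda\,w(Q)^{1/q}\leq[\sigma,w]\big(\int_Q f^p\sigma\big)^{1/p}$, i.e.\ $\lambda^q w(Q)\leq[\sigma,w]^q\big(\int_Q f^p\sigma\big)^{q/p}$.

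Summing over $\md_\lambda$ completes the proof. Writing $a_Q:=\int_Q f^p\sigma\geq0$ and using that $q/p\geq1$ together with the disjointness of the cubes of $\md_\lambda$, one has $\sum_Q a_Q^{q/p}\leq\big(\sum_Q a_Q\big)^{q/p}\leq\norm{f}_{L^p(\sigma)}^q$ --- the very inequality invoked at the end of the proof of Lemma~\ref{cet}. Therefore
\[
\lambda^q w\big(\{M_\alpha^{\m D}(f\sigma)>\lambda\}\big)=\lambda^q\sum_{Q\in\md_\lambda}w(Q)\leq[\sigma,w]^q\sum_{Q\in\md_\lambda}\Big(\int_Q f^p\sigma\Big)^{q/p}\leq[\sigma,w]^q\norm{f}_{L^p(\sigma)}^q,
\]
which is \eqref{weakest1}; letting $f$ increase to a general element of $L^p(\sigma)$ and then summing over the $3^n$ grids of Lemma~\ref{mf2d} recovers Theorem~\ref{weakmax}. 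Frankly there is no serious obstacle in this lemma: it is the textbook weak-$(p,q)$ proof for a maximal operator, and the only two points calling for a word of care are the existence of the maximal stopping cubes (handled by the reduction to compactly supported $f$, or by Remark~\ref{reduction}) and the passage from $\ell^{q/p}$ to $\ell^1$ over disjoint cubes, which is the sole place where the hypothesis $p\leq q$ is used.
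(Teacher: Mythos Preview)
Your proof is correct and follows the same stopping-time architecture as the paper: form the maximal cubes $\md_\lambda$ on which $\qan\avg{f\sigma}_Q>\lambda$, use the definition of $[\sigma,w]$ on each cube, and sum via disjointness together with $q/p\geq 1$. The only difference is in how the sum is closed for $p>1$: the paper rewrites each summand as $\sigma(Q)^{q/p}(\avg{f}_Q^\sigma)^q$ and invokes the Carleson Embedding Theorem (Lemma~\ref{cet}), observing that $\{\sigma(Q)^{q/p}\}_{Q\in\md_\lambda}$ is $p,q$--Carleson by disjointness, whereas you apply H\"older directly on each cube to get $(\int_Q f^p\sigma)^{q/p}$ and then sum by the $\ell^{q/p}$ superadditivity. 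Your route is more elementary (it avoids the detour through Lemma~\ref{cet} and its appeal to the dyadic maximal function) and in fact dominates the paper's summand cube-by-cube; the paper's phrasing, on the other hand, keeps the argument parallel to the later proofs where the Carleson embedding is genuinely needed.
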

\begin{proof}
Let $\md_\lambda$ be the maximal elements of $\md$ contained in $Q_0$ such that
$\qan\avg{f\sigma}_Q > \lambda$. Since $\avg{f\sigma}_Q = 
\avg{f}_Q^\sigma\avg{\sigma}_Q$, there holds:
\begin{align*}
\lambda^q w\{Mf>\lambda\} 
  \leq \sum_{Q\in\md_\lambda}\lambda^q w(Q)
  \leq \sum_{Q\in\md_\lambda}\abs{Q}^{\frac{q\alpha}{n}}\avg{\sigma}_Q^q w(Q) 
    \left(\avg{f}_Q^\sigma\right)^q
  \leq [\sigma,w]^q \sum_{Q\in\md_\lambda}
  \sigma(Q)^{\frac{q}{p}}\left(\avg{f}_Q^\sigma\right)^q.
\end{align*}


Given the disjointness of the sets $Q\in\md_\lambda$, \eqref{weakest1} is immediate
for $p=1$.  For $p>1$, notice the sequence 
$\{\sigma(Q)^{q/p}\}_{Q\in\md_\lambda}$ is $p,q$--Carleson with respect to the measure
$\sigma$.
\end{proof}

\section{Proof of Theorem \ref{maxbump}}
By Lemma \ref{mf2d}, Theorem \ref{maxbump} follows from the following lemma.
We remark that while the following proof does not make explicit use of the
Sawyer Maximal testing inequalities in \cite{Saw}, the proof does use 
some of the same ideas.
\begin{lm}\label{fracmo}
Let $1<p \leq q < \infty$, and let $\sigma$ and $w$ be two weights. 
Given a dyadic grid $\mathcal{D}$, let $M_{\alpha}$ be the dyadic fractional 
maximal operator. 
Let $\epsilon_q$ be a monotonic increasing function on $(1,\infty)$ that satisfies 
$\int_{1}^{\infty}\frac{dt}{t\epsilon_q^q(t)}=1$.  Define
\begin{align*}
\beta(Q)
:=\frac{\sigma(Q)^{1/p'}w(Q)^{1/q}}{\abs{Q}^{1-\alpha/n}}\rho_{\sigma}^{1/p}(Q)\epsilon_q(\rho_{\sigma}(Q)),
\end{align*}
Set $\mbump{\sigma}{w} := \sup_{Q\in\mathcal{Q}}\beta(Q)$, then
\begin{align*}
\norm{M_\alpha{f\sigma}}_{L^q(w)}
    \lesssim \mbump{\sigma}{w} \norm{f}_{L^p(\sigma)}.
\end{align*}
\end{lm}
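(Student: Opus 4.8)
The plan is to reduce, via Remark \ref{reduction}, to estimating the sparse operator $\sum_{Q\in\mS}\qan\avg{f}_Q\one_{E_Q}$, where $\mS$ is the sparse stopping collection built on $f$ with respect to $\sigma$-averages of $f$ (stopping when $\avg{f}_Q^\sigma$ jumps by a factor of, say, $4$), and the $E_Q$ are the pairwise disjoint stopping sets. Writing $\avg{f\sigma}_Q=\avg{f}_Q^\sigma\avg{\sigma}_Q$ and using that $\avg{f}_Q^\sigma\le\avg{f}_{Q^S}^\sigma$ is comparable to the stopping value $a_S$ on the $\mS$-parent $S=Q^S$, we get
\begin{align*}
\norm{M_\alpha^\md(f\sigma)}_{L^q(w)}^q
\lesssim \sum_{S\in\mS} a_S^q \int \Bigl(\sup_{Q\subset S,\,Q^S=S}\qan\avg{\sigma}_Q\one_Q\Bigr)^q w\,dx.
\end{align*}
The inner integral is $\int_S M_\alpha^\md(\sigma\one_S)^q\,w\,dx$ restricted to the relevant generation, which I will bound by $\mbump{\sigma}{w}^q$ times a manageable factor coming from the bump.

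The key step is to show that the sequence indexed by $S\in\mS$ with weight $\sigma(S)^{q/p}$ times the stopping fraction is a $p,q$-Carleson sequence in $\sigma$, so that Lemma \ref{cet} applies and converts $\sum_S a_S^q\,\sigma(S)^{q/p}$ into $\norm{f}_{L^p(\sigma)}^q$. Concretely, I want to estimate $\int_S M_\alpha^\md(\sigma\one_S)^q\,w$ by first using the distributional/layer-cake identity $\int_S M_\alpha^\md(\sigma\one_S)^q w = q\int_0^\infty \lambda^{q-1} w\{x\in S: M_\alpha^\md(\sigma\one_S)>\lambda\}\,d\lambda$, splitting the range of $\lambda$ at the natural threshold $\abs{S}^{\alpha/n}\avg{\sigma}_S$, and on the high range running a Calderón–Zygmund-type stopping decomposition of $S$ relative to $\sigma$: the cubes $R\subset S$ maximal with $\abs{R}^{\alpha/n}\avg{\sigma}_R>\lambda$ are disjoint, so $w$ of the level set is $\le\sum_R w(R)$, and each term is controlled by $\beta(R)$ i.e.\ by $\mbump{\sigma}{w}$ after dividing by $\sigma(R)^{1/p'}\rho_\sigma^{1/p}(R)\epsilon_q(\rho_\sigma(R))$ and using $\abs{R}^{\alpha/n}\avg{\sigma}_R>\lambda$ to absorb powers of $\lambda$. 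The entropy integrability hypothesis $\int_1^\infty \frac{dt}{t\epsilon_q^q(t)}=1$ is exactly what makes the resulting $\lambda$-integral (equivalently, a sum over the $\rho_\sigma$-scale of the stopping cubes) converge; this is where the entropy function earns its keep, playing the role that a log-bump or $B_p$-type Orlicz condition plays in the classical arguments. The quantity $\rho_\sigma(Q)=\sigma(Q)^{-1}\int_Q M(\sigma\one_Q)$ appears because the iterated maximal-function tail is summed geometrically and each dyadic generation contributes a factor measured by $\rho_\sigma$.

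Once the localized bound $\int_S M_\alpha^\md(\sigma\one_S)^q w \lesssim \mbump{\sigma}{w}^q\,\sigma(S)^{q/p}$ (possibly with a harmless $\rho_\sigma$-dependent factor that is reabsorbed by the definition of $\beta$) is in hand, one assembles: $\norm{M_\alpha^\md(f\sigma)}_{L^q(w)}^q\lesssim \sum_{S\in\mS} a_S^q\,\mbump{\sigma}{w}^q\,\sigma(S)^{q/p}$. Since the stopping construction is sparse, the halving property $\sum_{S'\in\mathrm{ch}(S)}\abs{S'}\le\tfrac12\abs{S}$ together with $\abs{S}\le 2\abs{E_S}$ and the doubling behavior one extracts for $\sigma$ along the stopping tree shows $\{\sigma(S)^{q/p}\}_{S\in\mS}$ is $p,q$-Carleson in $\sigma$; Lemma \ref{cet} then yields $\sum_{S\in\mS} a_S^q\,\sigma(S)^{q/p}=\sum_{S\in\mS}\sigma(S)^{q/p}\bigl(\avg{f}_S^\sigma\bigr)^q\lesssim\norm{f}_{L^p(\sigma)}^q$, completing the proof.

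I expect the main obstacle to be the localized estimate for $\int_S M_\alpha^\md(\sigma\one_S)^q\,w$: making the interplay between the stopping threshold $\abs{R}^{\alpha/n}\avg{\sigma}_R>\lambda$, the weight mass $w(R)$, the power $q$, and the entropy factor $\epsilon_q(\rho_\sigma)$ come out so that the $\lambda$-integral closes against precisely the normalization $\int_1^\infty\frac{dt}{t\epsilon_q^q(t)}=1$. In particular one must be careful that $\rho_\sigma$ is essentially constant (up to bounded factors) when passing between $S$ and the stopping cubes $R\subset S$ appearing at a given scale, or else arrange the summation so that only the supremum of $\beta$ is used; this monotonicity/stability of $\rho_\sigma$ under the stopping tree, and the precise bookkeeping of the $\rho_\sigma^{1/p}$ versus $\rho_\sigma\epsilon_q^q$ factors, is the delicate point. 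Everything else — the reduction to sparse operators, the Carleson embedding, and the final assembly — is routine given the machinery in Section 2.
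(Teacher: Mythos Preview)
Your architecture --- reduce to a sparse form, isolate a localized testing quantity $\int_S M_\alpha^{\mathcal D}(\sigma\one_S)^q\,w$, and close with a Carleson embedding --- is reasonable in spirit, but there is a genuine gap at the step you yourself flag as delicate, and it does not close the way you describe. A preliminary remark: the sparse collection produced by Remark~\ref{reduction} stops on the fractional averages $|Q|^{\alpha/n}\langle f\sigma\rangle_Q$, not on $\langle f\rangle_Q^\sigma$, so the corona you describe is a different construction and does not come for free. In fact the corona layer is unnecessary: since the $E_Q$ are pairwise disjoint, the $L^q(w)$-norm already factorises as $\sum_Q(|Q|^{\alpha/n}\langle\sigma\rangle_Q)^q(\langle f\rangle_Q^\sigma)^q\,w(E_Q)$, and one can apply Lemma~\ref{cet} directly with $\mu=\sigma$ and $a_Q=(|Q|^{\alpha/n}\langle\sigma\rangle_Q)^q w(Q)$. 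This is what the paper does; your corona-plus-testing route would, if it worked, reproduce the same Carleson verification with extra bookkeeping.

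The real problem is the layer-cake. Carrying out your computation, for $R$ maximal with $|R|^{\alpha/n}\langle\sigma\rangle_R>\lambda$ the bound $\beta(R)\le\lceil\sigma,w\rceil$ yields
\[
\lambda^{q-1}\,w(R)\ \le\ \lceil\sigma,w\rceil^q\,\lambda^{-1}\,\frac{\sigma(R)^{q/p}}{\rho_\sigma(R)^{q/p}\,\epsilon_q^q(\rho_\sigma(R))}.
\]
Summing over the disjoint $R$'s at level $\lambda$ and splitting by $\rho_\sigma(R)\simeq 2^r$, the entropy condition does absorb the $r$-sum --- but the outcome is $\lesssim\lceil\sigma,w\rceil^q\,\lambda^{-1}\sigma(S)^{q/p}$ \emph{uniformly in} $\lambda$, and the remaining $\int\lambda^{-1}\,d\lambda$ diverges. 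The entropy hypothesis is a condition on $\rho_\sigma$-scales, not on $\lambda$-scales, and there is no mechanism linking the two; $\rho_\sigma$ is in no sense ``essentially constant'' along the stopping cubes $R$. What the paper does instead is avoid the layer-cake and exploit \emph{sparseness} of $\mathcal S$: for any root $P$ one proves
\[
\sum_{\substack{Q\in\mathcal S,\ Q\subset P\\ \rho_\sigma(Q)\simeq 2^r}}\sigma(Q)^{q/p}\ \lesssim\ 2^{rq/p}\,\sigma(P)^{q/p}
\]
by writing $\sigma(Q)\lesssim\int_{E_Q}M(\sigma\one_{S^*})$ (this uses $|Q|\simeq|E_Q|$, i.e.\ sparseness), summing the disjoint $E_Q$ inside each maximal $S^*$ with $\rho_\sigma(S^*)\simeq 2^r$, and invoking $\int_{S^*}M(\sigma\one_{S^*})=\rho_\sigma(S^*)\sigma(S^*)\simeq 2^r\sigma(S^*)$. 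Dividing by $2^{rq/p}\epsilon_q^q(2^r)$ and summing in $r$ then uses the entropy condition exactly once and gives the Carleson bound. Sparseness of the collection is indispensable here, and it is precisely what the layer-cake formulation discards.
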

\begin{proof}
\noindent Let $\m{S}$ be any sparse subset of $\md$. By Remark \ref{reduction} we need to verify 
\begin{align}\label{fracmote}
\int_{Q_0}\abs{\sum_{Q\in\mathcal{S}:Q\subset Q_0}\qan \avg{f\sigma}_Q\one_{E_Q}(x)}^{q}w(x)dx
\lesssim \mbump{\sigma}{w}^q\norm{f}_{L^p(\sigma)}^q.
\end{align} 

Let $\mathcal{Q}_k:=\{Q\in\mathcal{S}, Q\subset Q_0: \mbump{\sigma}{w} 2^{-k}\leq\beta(Q)\leq 
\mbump{\sigma}{w}2^{-k+1}\}$. 
We will show
\begin{align}\label{main}
\int_{Q_0}\abs{\sum_{Q\in\mathcal{Q}_k}\qan\avg{f\sigma}_Q\one_{E_Q}(x)}^qw(x)dx
\lesssim (2^{-k})^q\mbump{\sigma}{w}^q\norm{f}_{L^p(\sigma)}^q.
\end{align}
Taking $q^{th}$ roots and summing over $k$ will imply \eqref{fracmote}.

Using the identity $\avg{f\sigma}_Q=\avg{\sigma}_Q\avg{f}_{Q}^\sigma$ and the
pairwise disjointness of the sets $E_Q$, \eqref{main} will follow from: 
\begin{align*}
\sum_{Q\in\m{Q}_k}
\frac{\abs{Q}^{q\alpha/n}\sigma(Q)^qw(Q)}{\abs{Q}^q}(\avg{f}_{Q}^\sigma)^q
\lesssim (2^{-k})^q\mbump{\sigma}{w}^q\norm{f}_{L^p(\sigma)}^q.
\end{align*}
Thus, by the Carleson Embedding Theorem (Lemma \ref{cet}), it is enough to verify:
\begin{align*}
\frac{1}{\sigma(P)^{q/p}}\sum_{Q\in\m{Q}_k:Q\subset P}
  \frac{\abs{Q}^{q\alpha/n}\sigma(Q)^qw(Q)}{\abs{Q}^q}
\lesssim (2^{-k})^q\mbump{\sigma}{w}^q,
\end{align*}
for all $P\in\m{Q}_k$. Using the fact that 
$\beta(Q)\simeq 2^{-k}\mbump{\sigma}{w}$ for $Q\in\mathcal{Q}_k$ we estimate:
\begin{align*}
\sum_{Q\in\m{Q}_k:Q\subset P}
  \frac{\abs{Q}^{q\alpha/n}\sigma(Q)^qw(Q)}{\abs{Q}^q}
&=\sum_{Q\in\mathcal{Q}_k:Q\subset P}\frac{\abs{Q}^{q\alpha/n}
  \sigma(Q)^{q/p'}w(Q)}{\abs{Q}^{q}}\sigma(Q)^{q/p}
\\&\simeq (2^{-k})^q\mbump{\sigma}{w}^q \sum_{Q\in\mathcal{Q}_k:Q\subset P}
    \frac{1}{\rho_{\sigma}(Q)^{q/p}\epsilon_q^q(\rho_\sigma(Q))}\sigma(Q)^{q/p}.
\end{align*}

We want to show that the sum above is dominated by $\sigma(P)^{q/p}$.
To this end, set $\mathcal{S}_r=\{Q\in\mathcal{Q}_k, Q\subset P:2^{r-1}\leq \rho_\sigma(Q)
\leq 2^{r} \}$. Thus, the sum above is dominated by
\begin{align*}
\sum_{r=0}^{\infty}\frac{1}{2^{rq/p}\epsilon_q^q(2^r)}
\sum_{Q\in\mathcal{S}_r}\sigma(Q)^{q/p}.
\end{align*}
Appealing to the summability condition on $\epsilon_q$, it suffices to show that 
\begin{align}
\sum_{Q\in\mathcal{S}_r}\sigma(Q)^{q/p}
\leq 2^{qr/p}\sigma(P)^{q/p}. \label{fm2}
\end{align}
Let $\mathcal{S}_r^{\ast}$ be the maximal elements in $\mathcal{S}_r$. 
Observe that for fixed $S^\ast\in\mathcal{S}_r^\ast$, and for any
$P\subset S^{\ast}$, there holds:
\begin{align*}
\left(\int_{E_Q}\avg{\one_{S^*\sigma}}_Q \one_Q\right)^{q/p}
\leq \left(\int_{E_Q}\sup_{P\in\mathcal{D}}\avg{\one_{S^\ast}\sigma}_P \one_P\right)^{q/p}.
\end{align*}
Since the sets $E_Q$ are pairwise disjoint,  
$\abs{Q}\simeq \abs{E_Q}$, and $\int_{S^\ast}\sup_{P\in\mathcal{D}}\avg{\one_{S^\ast}\sigma}_P
\leq \sigma(S^\ast)\rho_\sigma(S^\ast)\simeq 2^r\sigma(S^\ast)$ for
$S^\ast\in\mathcal{S}_r^\ast$, we estimate
\begin{align*}
\sum_{Q\in\mathcal{S}_r}\sigma(Q)^{q/p}
&\leq\sum_{S^{\ast}\in\mathcal{S}_r^{\ast}}
    \sum_{Q\subset S^{\ast}}\left(\int_{E_Q}
    \sup_{P\in\mathcal{D}}\avg{\one_{S^\ast}\sigma}_{P}\one_P\right)^{q/p}
\\&\leq \sum_{S^{\ast}\in\mathcal{S}_r^{\ast}} 
    \left(\int_{S^*}\sup_{P\in\mathcal{D}}\avg{\one_{S^*}\sigma}_P\one_P\right)^{q/p}
\\&\lesssim 2^{qr/p} \sum_{S^{\ast}\in\mathcal{S}_r^{\ast}} \sigma^{q/p}(S^*).
\end{align*}
Using the disjointness of the sets $S^\ast\in\mathcal{S}_r^\ast$, the sum 
in the last line above is dominated by $\sigma(P)^{q/p}$, completing the
proof.

\end{proof}

\section{Proof of Theorem \ref{frac1bump}}
By Lemma \ref{ff2d}, Theorem \ref{frac1bump} follows from the following lemma.
\begin{lm}
Let $1<p\leq q < \infty$, and let $\sigma$ and $w$ be two weights. Given a dyadic grid $\mathcal{D}$,
let $I_\alpha^{\m{D}}$ be the dyadic fractional integral operator. 
Let $\epsilon_p$ be a monotone increasing function on $(1,\infty)$ such that $\int_{1}^{\infty}\frac{dt}{t\epsilon_p^p(t)}=1$,
and similarly for $\epsilon_{q'}$. Define
\begin{align*}
\beta(Q)
:=\frac{\sigma(Q)^{1/p'}w(Q)^{1/q}\qan}{\abs{Q}}
    \rho_\sigma(Q)^{1/p}\epsilon_p(\rho_{\sigma}(Q))
    \rho_{w}(Q)^{1/q'}\epsilon_{q'}(\rho_w(Q)).
\end{align*}
Set $\cbump{\sigma}{w} := \sup_{Q\in\mathcal{Q}}\beta(Q)$, then
\begin{align*}
\norm{I_\alpha^{\m{D}} (f\sigma)}_{L^q(w)}
    \lesssim \cbump{\sigma}{w} \norm{f}_{L^p(\sigma)}.
\end{align*}
\end{lm}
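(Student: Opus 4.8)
The plan is to reduce to the dyadic sparse operator and then invoke the testing-inequality Lemma \ref{testingdyad}, estimating the two testing constants $\beta_1$ and $\beta_2$ by the single-bump quantity $\cbump{\sigma}{w}$. By Remark \ref{reduction} (specifically \eqref{reduction4}), it suffices to bound $\sum_{S\in\mS}\abs{S}^{\alpha/n}\avg{f\sigma}_S\one_S$ for an arbitrary sparse collection $\mS\subset\md$ contained in a root $Q_0$, with constants independent of $\mS$ and $Q_0$. Lemma \ref{testingdyad} then reduces everything to checking
\begin{align*}
\frac{1}{\sigma(P)^{q/p}}\int_P\Bigl(\sum_{\substack{Q\in\mS\\ Q\subset P}}\qan\avg{\sigma}_Q\one_Q\Bigr)^q w\,dx \lesssim \cbump{\sigma}{w}^q
\end{align*}
for all $P\in\mS$, together with the dual estimate for $\beta_2$ obtained by exchanging the roles of $(\sigma,p)$ and $(w,q')$.

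To estimate $\beta_1$, I would first split $\mS$ by the size of $\beta(Q)$ into the layers $\mQ_k := \{Q\in\mS:\cbump{\sigma}{w}2^{-k}\le\beta(Q)\le\cbump{\sigma}{w}2^{-k+1}\}$, prove the estimate for each layer with a gain of $2^{-k}$, and sum in $k$; this mirrors the proof of Lemma \ref{fracmo}. Within a fixed layer, I would pass from the sparse sum to the fractional maximal operator: since $\sum_{Q^S=S}\qan\one_Q\lesssim C_{\alpha,n}\abs{S}^{\alpha/n}\one_S$ is geometric, the inner integral is controlled by $\int_P (M_\alpha^\md(\one_P\sigma))^q w\,dx$-type terms, or more directly one runs the Carleson Embedding argument as in Section 4. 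The crucial input is that the sequence built from $\abs{Q}^{q\alpha/n}\avg{\sigma}_Q^q w(Q)/\abs{Q}^q$ (times the appropriate power of $\rho_\sigma$) forms a $p,q$-Carleson sequence with respect to $\sigma$, which follows from the definition of $\rho_\sigma$ exactly as in \eqref{fm2}: stratifying by $2^{r-1}\le\rho_\sigma(Q)\le 2^r$ and using that $\int_{S^*}M(\one_{S^*}\sigma)\le\sigma(S^*)\rho_\sigma(S^*)$ on maximal cubes. The summability hypothesis $\int_1^\infty\frac{dt}{t\epsilon_p^p(t)}=1$ is what lets us sum the resulting geometric-in-$r$ series; the $\epsilon_p$ bump is absorbed into this step and the $\rho_\sigma^{1/p}$ factor accounts for the $\sigma$-Carleson normalization.

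For $\beta_2$ the computation is symmetric: one replaces $\sigma$ by $w$, $p$ by $q'$, $q$ by $p'$, uses $\rho_w$ and $\epsilon_{q'}$, and the relevant Carleson condition is now with respect to $w$. The only asymmetry to watch is that $\beta(Q)$ as defined in the lemma contains \emph{both} bump factors $\rho_\sigma^{1/p}\epsilon_p(\rho_\sigma)$ and $\rho_w^{1/q'}\epsilon_{q'}(\rho_w)$, so when estimating $\beta_1$ we only ``spend'' the $\sigma$-side factors and the $w$-side factors are a harmless extra gain (they are $\ge 1$), and vice versa for $\beta_2$; thus $\beta(Q)\le\cbump{\sigma}{w}$ is more than enough in each case. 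The main obstacle, as in Lemma \ref{fracmo}, is the Carleson-sequence verification \eqref{fm2}-style inequality: one must check that after the dyadic stratification by $\rho_\sigma$ (resp. $\rho_w$), the local masses $\sum_{Q\subset P}\sigma(Q)^{q/p}$ over cubes with $\rho_\sigma(Q)\sim 2^r$ do not exceed $2^{qr/p}\sigma(P)^{q/p}$, which is where the definition of $\rho_\sigma$ and the pairwise disjointness of the sets $E_Q$ (with $\abs{Q}\simeq\abs{E_Q}$) are used. Everything else—the layering in $k$, the geometric sum producing $C_{\alpha,n}$, and the application of Lemma \ref{cet}—is routine.
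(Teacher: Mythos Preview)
The paper does \emph{not} go through the testing lemma at all. It proceeds by duality: pair the sparse sum against $gw$ for $g\in L^{q'}(w)$, rewrite each term using $\avg{f\sigma}_Q=\avg{f}_Q^\sigma\avg{\sigma}_Q$ and $\avg{gw}_Q=\avg{g}_Q^w\avg{w}_Q$, pull out the factor $\beta(Q)\le\cbump{\sigma}{w}$, and then apply H\"older in the summation index to split into two sums that are each handled by the Carleson Embedding (Lemma~\ref{cet}) exactly as in Lemma~\ref{fracmo}. This avoids both the testing machinery and any $q$-th power of an overlapping sum.

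Your route through Lemma~\ref{testingdyad} has a real gap. The testing quantity $\beta_1$ involves $\bigl(\sum_{Q\subset P}\qan\avg{\sigma}_Q\one_Q\bigr)^q$, with \emph{overlapping} indicators $\one_Q$, whereas the entire Section~4 argument relies on the reduction \eqref{reduction2} to \emph{disjoint} sets $\one_{E_Q}$, so that the $L^q(w)$-norm becomes a plain sum $\sum_Q(\cdots)^q w(E_Q)$ to which Lemma~\ref{cet} applies. Your proposed fix --- invoking the geometric sum $\sum_{Q^S=S}\qan\one_Q\lesssim \abs{S}^{\alpha/n}\one_S$ --- still leaves you with $\sum_{S}\abs{S}^{\alpha/n}\avg{\sigma}_S\one_S$, which is again overlapping and is \emph{not} pointwise dominated by $M_\alpha^{\md}(\one_P\sigma)$. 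Bounding this $q$-th power is precisely what makes Section~6 long (the distributional Lemma~\ref{distest}, the $\Phi_{S,\ell}$ decomposition, etc.); it is not a reprise of Section~4. So ``more directly one runs the Carleson Embedding argument as in Section~4'' does not go through as stated.

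There is a secondary issue: you assert that the unused $w$-side factor $\rho_w^{1/q'}\epsilon_{q'}(\rho_w)$ is $\ge 1$, hence harmless when bounding $\beta_1$. But nothing in the hypothesis $\int_1^\infty \frac{dt}{t\epsilon_{q'}^{q'}(t)}=1$ forces $\epsilon_{q'}$ to be bounded below near $1$ (e.g.\ $\epsilon_{q'}(t)\sim (t-1)^a$ with $aq'<1$), and $\rho_w(Q)$ can be arbitrarily close to $1$. So $\cbump{\sigma}{w}$ may be strictly smaller than the one-sided bump you are implicitly comparing to, and the inequality goes the wrong way. The paper's duality argument uses \emph{both} bump factors simultaneously, one absorbed into each Carleson embedding, so this difficulty never arises.
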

\begin{proof}
We proceed by duality. Let $f\in L^p(\sigma)$ and $g\in L^{q'}(w)$.
Below we use the identity: $\avg{f\sigma}_Q=\avg{f}_{Q}^{\sigma}\avg{\sigma}_Q$,
where $\avg{f}_Q^\sigma:=\sigma(Q)^{-1}\int_Q f\sigma$. Using the definition
of $\cbump{\sigma}{w}$, there holds:
\begin{align*}
\ip{\sum_{Q\in\mathcal{Q}}\qan\avg{f\sigma}_Q\one_{Q}}{g w}_{L^2(dx)}
&=\sum_{Q\in\mathcal{Q}}\avg{f}_{Q}^{\sigma}\avg{g}_Q^{w}
    \abs{Q}^{\alpha/n}\avg{\sigma}_{Q}w(Q)\qan
\\&=\sum_{Q\in\mathcal{Q}}\avg{f}_{Q}^{\sigma}\sigma(Q)^{\frac{1}{p}}\avg{g}_{Q}^{w}w(Q)^{\frac{1}{q'}}
    \frac{\sigma(Q)^{\frac{1}{p'}}w(Q)^{\frac{1}{q}}\qan}
    {\abs{Q}^{1-\frac{\alpha}{n}}}
\\&\lesssim \cbump{\sigma}{w} \sum_{Q\in\mathcal{Q}}
    \frac{\avg{f}_{Q}^{\sigma}\sigma(Q)^{\frac{1}{p}}}
    {\rho_{\sigma}^{\frac{1}{p}}(Q)\epsilon_{p}(\rho_{\sigma}(Q))}
    \frac{\avg{g}_{Q}^{w}w(Q)^{\frac{1}{q'}}}
    {\rho_{w}^{\frac{1}{q'}}(Q)\epsilon_{q'}(\rho_w(Q))}.
\end{align*}
By H\"older's inequality, it suffices to show that 
\begin{align*}
\left(
    \sum_{Q\in\mathcal{S}}
    \frac{\sigma(Q)}
    {\rho_{\sigma}(Q)\epsilon_p^p(\rho_{\sigma}(Q))}(\avg{f}_Q^{\sigma})^p\right)^{1/p} 
\hspace{.25in} \text{and}\hspace{.25in}
 \left(
    \sum_{Q\in\mathcal{S}}
    \frac{w(Q)^{p'/q'}}
    {\rho_{w}^{p'/q'}(Q)\epsilon_{q'}^{p'}(\rho_w(Q))}(\avg{g}_{Q}^{w})^{q'}
    \right)^{1/p'}
\end{align*}
are dominated by $\norm{f}_{L^p(\sigma)}$ and $\norm{g}_{L^{q'}(w)}$,
respectively. 
Since $p\leq q$, it follows that $q'\leq p'$, so by the the Carleson Embedding Theorem
(Lemma \ref{cet}), it suffices to show:
\begin{align*}
\sum_{Q\in\mathcal{S}:Q\subset P}\frac{\sigma(Q)}
    {\rho_{\sigma}(Q)\epsilon_p^p(\rho_\sigma(Q))} \lesssim\sigma(Q_0)
\hspace{.25in} \text{ and } \hspace{.25in}
\sum_{Q\in\mathcal{S}:S\subset P} \frac{w(Q)^{p'/q'}}
    {\rho_{w}^{p'/q'}(Q)\epsilon_{q'}^{p'}(\rho_w(Q))}\avg{g}_{Q}^{wq'} \lesssim w^{p'/q'}(Q_0)
\end{align*}
for all $Q_0\in\mathcal{S}$. But the proof of each of these
estimates is similar to those in Lemma \ref{fracmo} and we omit the details
\end{proof}

\section{Proof of Theorem \ref{sbump}}

From Remark \ref{reduction} and Lemma \ref{testingdyad}, it is enough to show 

\begin{align*} 
\int_{Q_0} \abs{\sum_{Q\in\m{Q}:Q\subset Q_0}\abs{Q}^{\alpha/n}\avg{\sigma}_Q\one_Q(x)}^{q}w(x)dx \lesssim \sbump{\sigma}{w}_{\alpha,p,q}\sigma(Q_0)^{q/p}
\end{align*}

\noindent for any sparse collection $\mQ$ and $Q_0\in \mQ$ (the dual testing condition follows identically).  For the remainder, fix a root $Q_0$ and let $\mQ$ be a sparse collection of cubes contained in $Q_0$.  Fix $\alpha,p,q$ in the respective appropriate range; we'll ignore these fixed indices where there is no confusion.  It remains to show

\begin{align*} 
\left\|\sum_{Q\in\m{Q}}\abs{Q}^{\alpha/n}\avg{\sigma}_Q\one_Q\right\|_{L^q(w,Q_0)}\lesssim \sbump{\sigma}{w}^{1/q}\sigma(P)^{1/p}.\label{stest}
\end{align*}

For $Q\in\mQ$, define \[\beta(Q):= \left(|Q|^{\alpha/n}\avg{\sigma}_Q\right)^{q/p'} \avg{w}_Q \varrho_\sigma(Q)\eps_q\left(\varrho_\sigma(Q)\right).\]  For integers $a$ and $r$, set $\mQ^{a,r}:=\{Q\in\mQ : \beta(Q) \simeq 2^a, \varrho(Q) \simeq 2^r\}$; notice $\mQ^{a,r}$ is empty for $a$ large enough.  Construct a stopping family $\mS$ for the $\sigma$ fractional averages: let $\mS$ be the minimal subset of $\mQ^{a,r}$ containing the maximal cubes in $\mQ^{a,r}$ such that whenever $S\in\mS$, the maximal cubes $Q\subset S$, $Q\in\mQ^{a,r}$ with $|Q|^{\alpha/n}\avg{\sigma}_Q>4|S|^{\alpha/n}\avg{\sigma}_S$ are also in $\mS$.  Denote by $Q^\mS$ the $\mS$--parent of $Q$.  Partition $\mQ^{a,r}$ into $\mQ^{a,r}_k$, those cubes in $\mQ^{a,r}$ such that $|Q|^{\alpha/n}\avg{\sigma}_Q \simeq 2^{-k}|Q^\mS|^{\alpha/n} \avg{\sigma}_{Q^\mS}.$  We temporarily denote $\mQ^{a,r}_k$ by $\mQ'$.  We will show

\begin{align}
\left\|\sum_{Q\in\mQ'} |Q|^{\alpha/n}\avg{\sigma}_Q \one_Q\right\|_{L^q(w)}\lesssim 2^{-k} \left[\sum_{S\in\mS} |S|^{q\alpha/n}\avg{\sigma}_S^q w(S)\right]^{1/q},\label{kpholed}
\end{align}

\noindent where summing over $k\geq -2$ gives 

\begin{align} 
\norm{\sum_{Q\in\mQ^{a,r}} |Q|^{\alpha/n}\avg{\sigma}_Q\one_Q}_{L^q(w)}\lesssim \left[\sum_{S\in\mS} |S|^{q\alpha/n}\avg{\sigma}_S^q w(S)\right]^{1/q} .\label{arpholed}
\end{align}


Define for each $S\in\mS$

\begin{align*}
\Phi_S:=\sum_{Q\in\mQ':Q^\mS =S} |Q|^{\alpha/n} \avg{\sigma}\one_Q \,\,\,\,\,\,\,\,\,\, &\text{and} \,\,\,\,\,\,\,\,\,\, \Phi_{S,\ell}:=\Phi_S  \one_{\{\Phi_S \simeq \ell 2^{-k}|S|^{\alpha/n}\avg{\sigma}_S\}}.
\end{align*}

\noindent Since $\sum\limits_{S\in\mS} \Phi_{S,\ell}$ is geometric for fixed $\ell\in\Z^+,$ H\"older's inequality yields

\begin{align}
\left(\sum_{\ell\geq 1} \sum_{S\in\mS} \Phi_{S,\ell}\right)^q \lesssim \sum_{\ell\geq 1} \ell^{2q/q'} \left(\sum_{S\in\mS} \Phi_{S,\ell} \right)^q \simeq \sum_{\ell\geq 1} \ell^{2q/q'} \sum_{S\in\mS} \Phi_{S,\ell}^q.\label{elltrick}
\end{align}

\noindent It is apparent that we need the following distributional estimate.

\begin{lm}\label{distest} There holds \[w\left\{\Phi_{S} >\lambda 2^{-k} 
|S|^{\alpha/n}\avg{\sigma}_S\right\} \lesssim 2^{- \lambda}w(S).\]
\end{lm}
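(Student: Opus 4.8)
The plan is to exploit the stopping construction of $\mathcal{S}$ together with the definition of the subfamily $\mathcal{Q}' = \mathcal{Q}^{a,r}_k$. Fix $S \in \mathcal{S}$. For a cube $Q \in \mathcal{Q}'$ with $Q^{\mathcal{S}} = S$, the stopping condition guarantees $|Q|^{\alpha/n}\avg{\sigma}_Q \leq 4|S|^{\alpha/n}\avg{\sigma}_S$, and the partition into $\mathcal{Q}^{a,r}_k$ pins the fractional average down to $|Q|^{\alpha/n}\avg{\sigma}_Q \simeq 2^{-k}|S|^{\alpha/n}\avg{\sigma}_S$. So on $S$ the function $\Phi_S$ is, up to the universal constant, $2^{-k}|S|^{\alpha/n}\avg{\sigma}_S$ times the counting function $N(x) := \#\{Q \in \mathcal{Q}' : Q^{\mathcal{S}} = S,\ x \in Q\}$. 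Hence $\{\Phi_S > \lambda 2^{-k}|S|^{\alpha/n}\avg{\sigma}_S\} \subset \{x : N(x) \gtrsim \lambda\}$, and everything reduces to the exponential-type bound $w\{N \gtrsim \lambda\} \lesssim 2^{-\lambda}w(S)$.

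To get the geometric decay in the level sets of $N$, I would run a John–Nirenberg / iterated-stopping argument inside $S$: the cubes $Q \in \mathcal{Q}'$ with $Q^{\mathcal{S}} = S$ are dyadic and nested below $S$, and the superlevel set $\{N \geq j+1\}$ is contained in the union of the maximal cubes $Q \in \mathcal{Q}'$ (with $Q^{\mathcal{S}}=S$) on which $N \geq j+1$ holds; since these are the cubes where already $N \geq j$ strictly increases, and since each such $Q$ is a strict descendant of a previous-generation maximal cube, one obtains that $\{N \geq j+1\} \subset \bigcup$ of cubes each of which is properly contained in a generation-$j$ maximal cube. The crucial input is that this containment loses a definite fraction of measure at each generation — and this is exactly the sparseness of $\mathcal{Q}$ (hence of $\mathcal{Q}'$): the maximal proper subcubes of any $P \in \mathcal{Q}$ have total measure at most $\tfrac12|P|$. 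But sparseness is a Lebesgue-measure statement, whereas we need the decay in $w$-measure. To bridge this, I would instead stop not on a fixed measure fraction but use that on each generation-$j$ maximal cube $P$, the cube $S$ is a fixed ancestor, so by the stopping definition $\avg{\sigma}$ increases by a bounded factor, and combine with the $\varrho_\sigma \simeq 2^r$ normalization: the point is that the "depth" statement for $N$ should be phrased so that passing to the next generation costs a factor controlled by an $L^1(\sigma)$-Carleson estimate, and then one converts to $w$-measure via the $w$-Carleson content implicit in $\beta(Q) \simeq 2^a$.

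Concretely, the cleaner route is: set $E_j := \{N \geq j\}$, and show $w(E_{j+1}) \leq \tfrac12 w(E_j)$, or more honestly $w(E_{j+c}) \leq \tfrac12 w(E_j)$ for a fixed $c$. On each maximal cube $P$ of $\mathcal{Q}'$ (below $S$) contributing to $E_j$, the next deeper cube $Q \subsetneq P$ in $\mathcal{Q}'$ satisfies $|Q|^{\alpha/n}\avg{\sigma}_Q \simeq |P|^{\alpha/n}\avg{\sigma}_P$ (both $\simeq 2^{-k}|S|^{\alpha/n}\avg{\sigma}_S$), which forces $|Q|^{\alpha/n} \lesssim |P|^{\alpha/n}\sigma(P)/\sigma(Q)$; summing and using that $\sum_{Q}|Q|^{\alpha/n}\avg{\sigma}_Q\one_Q \lesssim M_\alpha(\one_P \sigma)$ together with the $\varrho_\sigma \simeq 2^r$ bound, $\int_P M_\alpha(\one_P\sigma)^{q/p} \simeq 2^r \sigma(P)^{q/p}$, one controls how much of $P$ can be covered by the next generation. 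Feeding this back in and iterating produces the geometric factor; then summing $w(E_\lambda) \lesssim 2^{-\lambda} w(E_0) = 2^{-\lambda}w(S)$ (after absorbing $c$ and universal constants) gives the lemma.

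The main obstacle I anticipate is precisely this passage from a Lebesgue/$\sigma$ sparseness estimate to a $w$-measure decay: the stopping family $\mathcal{S}$ and the subfamily $\mathcal{Q}'$ are built from $\sigma$-fractional averages, and there is no a priori reason the $E_j$ nest well in $w$. The resolution must use the restriction to $\mathcal{Q}^{a,r}$ — i.e. that $\beta(Q) \simeq 2^a$ ties $\avg{w}_Q$ to $(|Q|^{\alpha/n}\avg{\sigma}_Q)^{-q/p'}$ up to the $\varrho_\sigma\eps_q(\varrho_\sigma)$ factor — so that a $\sigma$-side geometric decay can be transported to the $w$-side on the relevant generations. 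Once that bookkeeping is set up the iteration is routine; I expect the write-up to spend most of its effort making the single-generation estimate quantitative and uniform in $S$, $a$, $r$, $k$.
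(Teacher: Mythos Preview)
Your setup is correct: $\Phi_S$ is comparable to $2^{-k}|S|^{\alpha/n}\avg{\sigma}_S$ times the counting function $N$, and the issue is exactly the passage from Lebesgue sparseness to a $w$-measure decay. You also correctly single out the constraint $\beta(Q)\simeq 2^a$ as the bridge. But you stop one step short of the observation that makes the lemma nearly trivial, and the iteration you sketch in the middle paragraph (via $M_\alpha(\one_P\sigma)^{q/p}$ and $\varrho_\sigma\simeq 2^r$) is both vaguer and harder than what is needed.

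The paper's proof is this: for $Q\in\mathcal{Q}'$ with $Q^{\mathcal{S}}=S$, all three quantities $\beta(Q)\simeq 2^a$, $\varrho_\sigma(Q)\simeq 2^r$, and $|Q|^{\alpha/n}\avg{\sigma}_Q\simeq 2^{-k}|S|^{\alpha/n}\avg{\sigma}_S$ are pinned down. Solving the definition of $\beta(Q)$ for $\avg{w}_Q$ shows that $\avg{w}_Q$ is \emph{essentially constant}, say $\avg{w}_Q\simeq \tau_S:=\dfrac{2^a}{2^r\eps_q(2^r)}\bigl(2^{-k}|S|^{\alpha/n}\avg{\sigma}_S\bigr)^{-q/p'}$, uniformly over all such $Q$. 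Since each superlevel set $\{\Phi_S>\lambda 2^{-k}|S|^{\alpha/n}\avg{\sigma}_S\}$ is a disjoint union of maximal cubes from $\mathcal{Q}'$, its $w$-measure is $\simeq\tau_S$ times its Lebesgue measure. Sparseness of $\mathcal{Q}$ gives the Lebesgue estimate $|\{N\geq\lambda\}|\lesssim 2^{-\lambda}\sum_{Q^*}|Q^*|$ (the maximal cubes $Q^*$ of $\mathcal{Q}'$ below $S$), and converting back via $\tau_S|Q^*|\simeq w(Q^*)$ and disjointness yields $w\{\Phi_S>\lambda 2^{-k}|S|^{\alpha/n}\avg{\sigma}_S\}\lesssim 2^{-\lambda}w(S)$.

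So there is no need for a generation-by-generation $w$-estimate, no need to invoke $M_\alpha$ or $\varrho_\sigma$ inside the iteration, and no bookkeeping: the constancy of $\avg{w}_Q$ means the $w$-measure and Lebesgue measure of the level sets are interchangeable up to a fixed factor. Your proposal would likely be made to work with enough effort, but the route through ``controlling how much of $P$ can be covered by the next generation'' using $\int_P M_\alpha(\one_P\sigma)^{q/p}$ is both circuitous and not obviously convergent as written; the paper's observation short-circuits all of it.
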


\begin{proof}
The inequality is immediate in the case $w$ is Lebesgue measure from sparseness of $\mQ$.  Notice that we have for $Q\in \mQ'$ with $Q^\mS =S$, \[\avg{w}_Q \simeq \frac{2^a}{2^r \eps_q(2^r)}(2^{-k}\avg{\sigma}_{S}|S|^{\alpha/n})^{-q/p'}=:\tau_{S},\] where the equivalence is independent of $S$.  Denote by $\mQ^*$ the maximal cubes in $\mQ'$.  Since the $\left\{\Phi_{S} >\lambda 2^{-k} 
|S|^{\alpha/n}\avg{\sigma}_S\right\}$ is the union of the maximal cubes $P\in\mQ'$ with $P^\mS=S$ and $ \inf\limits_{x\in P}\Phi_S(x)>\lambda 2^{-k}|S|^{\alpha/n}\avg{\sigma}_S$, hence a disjoint union, it follows that 

\begin{align*}
w\left\{\Phi_{S} >\lambda 2^{-k} |S|^{\alpha/n}\avg{\sigma}_S\right\} &\simeq \tau_S \left| \left\{\Phi_S>\lambda 2^{-k} |S|^{\alpha/n} \avg{\sigma}_S\right\}\right|\\
&\lesssim \tau_S \left(2^{-(\lambda-1)}\sum_{Q^*\in\mQ^*}|Q^*|\right)\\
&\simeq 2^{-\lambda}\sum_{Q^*\in\mQ^*}w(Q^*).
\end{align*}

The collection $\mQ^*$ is disjoint, so the proof is complete.

\end{proof}

Since $\{\Phi_{S,\ell}>\lambda 2^{-k}|S|^{\alpha/n}\avg{\sigma}_S\}$ is constant for $0<\lambda<\frac{\ell}{2}$ and is empty for $\lambda>\ell$, we have 

\begin{align*}
\int_{Q_0} \Phi_{S,\ell}^q dw &= 2^{-kq}|S|^{q \alpha/n}\avg{\sigma}_S^q \int_0^\infty q\lambda^{q-1} w\{\Phi_{S,\ell}>\lambda 2^{-k}|S|^{\alpha/n}\avg{\sigma}_S\}d\lambda\\
&\lesssim 2^{-kq}|S|^{q \alpha/n}\avg{\sigma}_S^q \left[ \left(\frac{\ell}{2}\right)^q 2^{-\ell/2}w(S)+ \frac{\ell}{2} q\ell^{q-1} 2^{-\ell/2}w(S) \right] \\
&\simeq 2^{-kq}|S|^{q \alpha/n}\avg{\sigma}_S^q \left[ \ell^q 2^{-\ell/2} w(S) \right], \\
\end{align*}

\noindent where the second inequality is the application of Lemma \ref{distest}.  Recalling \eqref{elltrick}, this gives \eqref{kpholed}. 

For each $S$ define $E_S$ to be $S$ less the members of $\mS$ properly contained in $S$.  Let $\mS^*$ be the maximal elements of $\mS$.  Since $\beta(S)\simeq 2^a$ and $\varrho(S)\simeq 2^r$ for all $S\in\mS$, the right hand side of \eqref{arpholed} is equivalent to

\begin{align*}
\left(\frac{2^a}{2^{r}\eps_q(2^r)} \sum_{S\in\mS} \left(|S|^{\alpha/n}\avg{\sigma}_S\right)^{q/p} |S|\right)^{1/q} &\lesssim \left[  \frac{2^a}{2^{r}\eps_q(2^r)} \left( \sum_{S^*\in\mS^*} \sum_{S^*\supseteq S\in\mS} \int_{E_S} M_\alpha(\one_{S*}\sigma)^\frac{q}{p} dx\right) \right]^{1/q}\\
&\simeq \left[  \frac{2^a}{\eps_q(2^r)} \left( \sum_{S^*\in\mS^*}  \sigma(S^*)^{q/p}\right) \right]^{1/q}\\
    &\lesssim (2^{1/q})^a \frac{1}{\eps^{1/q}_q(2^r)} \sigma(Q_0)^{1/p}.\\
\end{align*}

The first inequality above follows from $|S|\simeq |E_S|=\int_{E_S} dx$, and the third from $p\leq q$.  Summing over integers $r\geq 0$ evokes the integrability condition on $\eps_q$; summing over relevant integers $a$ completes the proof.

\section{Acknowledgements} 
We thank Michael Lacey for suggesting this topic and for useful conversations and suggestions.
We also thank Brett Wick for helpful comments regarding the presentation of the paper. 
We would also like to thank Kabe Moen for alerting us to a mistake that
we made in Theorem 1.1. 


\begin{bibdiv}
\begin{biblist}

\bib{CruMoe}{article}{
   author={Cruz-Uribe, David},
   author={Moen, Kabe},
   title={A fractional Muckenhoupt-Wheeden theorem and its consequences},
   journal={Integral Equations Operator Theory},
   volume={76},
   date={2013},
   number={3},
   pages={421--446}
}

\bib{CruMoe1}{article}{
   author={Cruz-Uribe, David},
   author={Moen, Kabe},
   title={One and two weight norm inequalities for Riesz potentials},
   journal={Illinois J. Math.},
   volume={57},
   date={2013},
   number={1},
   pages={295--323}
}

\bib{Cru}{article}{
    author={Cruz-Uribe, David},
    title={Two weight norm inequalities for 
    fractional integral operators and commutators},
    date={2015},
    eprint={http://arxiv.org/abs/1412.4157}
}


\bib{Dur}{article}{
   author={Duren, Peter L.},
   title={Extension of a theorem of Carleson},
   journal={Bull. Amer. Math. Soc.},
   volume={75},
   date={1969},
   pages={143--146}
}

\bib{Hyt}{article}{
   author={Hyt\"{o}nen, Tuomas P.},
   title={The $A_2$ Theorem: Remarks
   and Complements},
   date={2012},
   eprint={http://www.arxiv.org/abs/1212.3840},
}

\bib{Ler}{article}{
   author={Lerner, Andrei K.},
   title={A pointwise estimate for the local sharp maximal function with
   applications to singular integrals},
   journal={Bull. Lond. Math. Soc.},
   volume={42},
   date={2010},
   number={5},
   pages={843--856}
}

\bib{LacSawUri}{article}{
   author={Lacey, Michael T.},
   author={Sawyer, Eric T.},
   author={Uriarte-Tuero, Ignacio},
   title={Two Weight Inequalities for
   Discrete Positive Operators},
   date={2009},
   eprint={http://arxiv.org/abs/0911.3437},
}

\bib{LacSpe}{article}{
    author={Lacey, Michael T.},
    author={Spencer, Scott},
    title={On Entropy Bounds for Calder\'{o}n--Zygmund Operators},
    date={2015},
    eprint={http://arxiv.org/abs/1504.02888}
}

\bib{Neu}{article}{
   author={Neugebauer, C. J.},
   title={Inserting $A_{p}$-weights},
   journal={Proc. Amer. Math. Soc.},
   volume={87},
   date={1983},
   number={4},
   pages={644--648}
}

\bib{Per}{article}{
   author={P{\'e}rez, Carlos},
   title={Two weighted inequalities for potential and fractional type
   maximal operators},
   journal={Indiana Univ. Math. J.},
   volume={43},
   date={1994},
   number={2},
   pages={663--683}
}

\bib{Per1}{article}{
   author={P{\'e}rez, Carlos},
   title={On sufficient conditions for the boundedness of the
   Hardy-Littlewood maximal operator between weighted $L^p$-spaces with
   different weights},
   journal={Proc. London Math. Soc. (3)},
   volume={71},
   date={1995},
   number={1},
   pages={135--157}
}

\bib{Saw1}{article}{
   author={Sawyer, Eric T.},
   title={A characterization of two weight norm inequalities for fractional
   and Poisson integrals},
   journal={Trans. Amer. Math. Soc.},
   volume={308},
   date={1988},
   number={2},
   pages={533--545}
}

\bib{Saw}{article}{
   author={Sawyer, Eric T.},
   title={A characterization of a two-weight norm inequality for maximal
   operators},
   journal={Studia Math.},
   volume={75},
   date={1982},
   number={1},
   pages={1--11}
}

\bib{TreVol}{article}{
    author={Treil, Sergei},
    author={Volberg, Alexander},
    date={2015},
    eprint={http://arxiv.org/abs/1408.0385}
}

\end{biblist}
\end{bibdiv}


\end{document}